\crefname{equation}{}{}
\crefname{enumi}{}{}
\numberwithin{equation}{section}
\newtheorem{theorem}{Theorem}[section]
\newtheorem{proposition}[theorem]{Proposition}
\newtheorem{lemma}[theorem]{Lemma}
\theoremstyle{definition}
\newtheorem{definition}[theorem]{Definition}
\theoremstyle{remark}
\newtheorem*{remark}{Remark}
\newcommand{\abs}[1]{\left\lvert#1\right\rvert}
\newcommand\up[1]{^{(#1)}}
\DeclareMathOperator{\Left}{left}  
\DeclareMathOperator{\Right}{right}
\DeclareMathOperator{\twr}{twr}
\DeclareMathOperator{\PG}{PG}
\newcommand*{\eqdef}{\stackrel{\mbox{\normalfont\tiny def}}{=}}
\newcommand{\FF}{\mathbb{F}}
\newcommand*{\PP}{\mathbb{P}}
\newcommand{\cE}{\mathcal E}
\newcommand{\cT}{\mathcal{T}}
\newlength{\hght}
\renewcommand\Tilde[1]{\widetilde{#1}}
\newcommand\thankssymb[1]{\textsuperscript{\@fnsymbol{#1}}}
\author[Xiaoyu He]{Xiaoyu He\thankssymb{1}}
\thanks{\thankssymb{1} School of Mathematics, Georgia Institute of Technology, Atlanta, GA 30332\@. Email: {\tt xhe399@gatech.edu}}
\author[Jiaxi Nie]{Jiaxi Nie\thankssymb{2}}
\thanks{\thankssymb{2} School of Mathematics, Georgia Institute of Technology, Atlanta, GA 30332\@. Email: {\tt jnie47@gatech.edu}}
\author[Yuval Wigderson]{Yuval Wigderson\thankssymb{3}}
\thanks{\thankssymb{3} Institute for Theoretical Studies, ETH Z\"urich, 8006 Z\"urich, Switzerland. Email: {\tt yuval.wigderson@eth-its. ethz.ch}. Research supported by Dr.\ Max R\"ossler, the Walter Haefner Foundation, and the ETH Z\"urich Foundation.}
\author[Hung-Hsun Hans Yu]{Hung-Hsun Hans Yu\thankssymb{4}}
\thanks{\thankssymb{4}Department of Mathematics, Princeton University, Princeton, NJ 08544\@. Research supported by NSF grant DMS - 2246682. Email: {\tt hansonyu@princeton.edu}}
\title{Off-diagonal Ramsey numbers for linear hypergraphs}
\begin{document}

\begin{abstract}
We study off-diagonal Ramsey numbers $r(H, K_n^{(k)})$ of $k$-uniform hypergraphs, where $H$ is a fixed linear $k$-uniform hypergraph and $K_n^{(k)}$ is complete on $n$ vertices. Recently, Conlon et al.\ disproved the folklore conjecture that $r(H, K_n^{(3)})$ always grows polynomially in $n$. In this paper we show that much larger growth rates are possible in higher uniformity. In uniformity $k\ge 4$, we prove that for any constant $C>0$, there exists a linear $k$-uniform hypergraph $H$ for which 
\[
    r(H,K_n\up k) \geq \twr_{k-2}(2^{(\log n)^C}).
\]
 
\end{abstract}
\maketitle

\section{Introduction}
Let $H_1,H_2$ be two $k$-uniform hypergraphs ($k$-graphs for short). Their \emph{Ramsey number} $r(H_1,H_2)$ is defined as the least integer $N$ such that every red/blue edge-coloring of the complete $k$-graph $K_N\up k$ contains a monochromatic red copy of $H_1$ or a monochromatic blue copy of $H_2$. In this paper, we will be concerned with the \emph{off-diagonal} regime, where $H_1=H$ is a fixed $k$-graph, and $H_2=K_n\up k$ is a clique, and we are interested in the asymptotics of $r(H, K_n\up k)$ as $n \to \infty$. Note that $r(H, K_n\up k)$ can also be viewed as the least integer $N$ such that every $N$-vertex $H$-free $k$-graph contains an independent set of size $n$.

In the special case of $k=2$, it follows from the classical Erd\H os--Szekeres bound \cite{MR1556929} that $r(H,K_n\up 2) = n^{\Theta_H(1)}$, i.e.\ that $r(H,K_n\up 2)$ grows polynomially in $n$ for any fixed $H$. However, our knowledge of the correct order of polynomial growth is limited to extremely few graphs $H$. For example, while an early result of Erd\H os \cite{MR120168} implies that $r(K_3\up 2, K_n\up 2) = n^{2+o(1)}$ (with many further developments \cite{MR0600598,MR708165,MR1369063,MR4073152,MR4201797,2505.13371} obtaining much more precise asymptotic bounds), it was only extremely recently that Mattheus and Verstra\"ete \cite{MR4713025} proved that $r(K_4\up 2, K_n\up 2) = n^{3+o(1)}$. It remains a major open problem to determine the order of polynomial growth of $r(H,K_n\up 2)$ for most other graphs $H$, for example when $H$ is a clique of order at least $5$, or a cycle of length at least $4$.

For hypergraphs of uniformity $k \geq 3$, our understanding is even more limited. For a general fixed $H$, the best known upper bound on $r(H,K_n\up k)$ follows from the stepping-down technique of Erd\H os and Rado \cite{MR0065615}, which implies that
\begin{equation}\label{eq:twr UB}
    r(H,K_n\up k) \leq \twr_{k-1}(n^{O_H(1)}),
\end{equation}
where the \emph{tower function} is recursively defined by $\twr_1(x)=x$ and $\twr_i(x) = 2^{\twr_{i-1}(x)}$ for $i \geq 2$.

Perhaps surprisingly, the upper bound \eqref{eq:twr UB} turns out to be tight (up to the implicit constant) in many cases, including when $H$ is a clique of order at least $k+2$ \cite{MR3830113}. The proof of this result, and of many other lower bounds \cite{MR2552253,MR3030610,MR3641804,2404.02021} on hypergraph Ramsey numbers, uses a variant of the celebrated stepping-up construction of Erd\H os, Hajnal, and Rado \cite{MR0202613}, which we shortly discuss in more detail.

Although the bound in \eqref{eq:twr UB} is tight when $H$ is a clique, it is natural to expect that it is far from tight when $H$ is ``far from being a clique''. For example, a simple supersaturation argument, going back at least to the early work of Erd\H os and Hajnal \cite{MR337636}, demonstrates that $r(H,K_n\up k)=n^{O_H(1)}$ whenever $H$ is $k$-partite, or, more generally, \emph{iterated} $k$-partite\footnote{The class of iterated $k$-partite hypergraphs is defined recursively as follows. First, a hypergraph with no edges is iterated $k$-partite. Next, $H$ is iterated $k$-partite if $V(H)$ can be partitioned into $k$ parts $V_1,\dots,V_k$ such that every edge either transverses the $k$ parts or is fully contained in one part, and $H[V_i]$ is iterated $k$-partite for all $i$.}. It is conjectured in \cite{2411.13812} that this is in fact a complete characterization: $H$ is iterated $k$-partite if and only if $r(H,K_n\up k) = n^{O_H(1)}$. 

Towards resolving this conjecture, or more generally towards understanding the behavior of $r(H,K_n\up k)$ for general $H$, it is natural to ask what happens when $H$ is still ``far from clique-like'', but is not necessarily iterated $k$-partite. One such class of hypergraphs is the class of \emph{linear} hypergraphs, in which every pair of edges intersects in at most one vertex. A longstanding folklore conjecture in the field was that in uniformity $k=3$, every linear $3$-graph $H$ satisfies $r(H,K_n\up 3) = n^{O_H(1)}$. However, this conjecture was recently disproved in \cite{2404.02021}.
\begin{theorem}[{\cite[Theorem 1.4]{2404.02021}}]\label{thm:linear LB 3-uniform}
    For every $C>1$, there exists a linear $3$-graph $H$ such that $r(H,K_n\up 3) \geq 2^{(\log n)^C}$ for all sufficiently large $n$.
\end{theorem}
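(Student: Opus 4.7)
The plan is to construct, for each $C > 1$, a linear $3$-uniform hypergraph $H$ together with an $N$-vertex $H$-free $3$-graph $G$ whose independence number is less than $n$, where $N = 2^{(\log n)^C}$. A pure random approach is provably insufficient for super-polynomial $N$: balancing $p \gtrsim \log N / n^2$ (so that no $n$-set is independent) with $p \lesssim N^{-v/e}$ (so that a fixed linear $H$ with $v$ vertices and $e$ edges does not appear) in a random $3$-graph of edge density $p$ forces $N \leq n^{O_H(1)}$. Any successful approach must therefore bring additional structure.

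The natural strategy is recursive amplification. The plan is to define an operation taking a pair $(H_i, G_i)$, consisting of a linear $3$-graph and an $H_i$-free host with controlled independence number, to a new pair $(H_{i+1}, G_{i+1})$ with substantially more vertices but a similar independence-to-size profile; iterating this operation many times then yields the desired quasi-polynomial gap. A natural candidate is an algebraic blow-up: given a prime $q$, set $V(H^{+q}) = V(H) \times \FF_q$ and include $\{(u,a),(v,b),(w,c)\}$ as an edge whenever $\{u,v,w\} \in E(H)$ and $a+b+c \equiv 0 \pmod{q}$. This preserves linearity, as any two vertices determine the third coordinate via $a+b+c \equiv 0$, and an analogous construction on $G$ gives $G^{+q}$ which inherits $H^{+q}$-freeness by projecting to first coordinates (on a suitable or random section of $V(H) \to V(H) \times \FF_q$).

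Starting from a base pair $(H_0, G_0)$ for which $r(H_0, K_n\up 3)$ has a modest polynomial lower bound coming from a standard probabilistic construction, iterating the amplification $t$ times with well-chosen primes $q_1, \ldots, q_t$ should yield $(H_t, G_t)$ with $|V(G_t)|$ growing very rapidly in $|V(G_0)|$ while $\alpha(G_t)$ grows only modestly. Tuning $t$ (together with the primes $q_i$) to scale like a power of $\log n$ would then push $|V(G_t)|$ past $2^{(\log n)^C}$ while keeping $\alpha(G_t) \leq n$, and setting $H = H_t$ would complete the proof.

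The main obstacle lies in controlling the independence number through each iteration. The trivial bound is $\alpha(G^{+q}) \leq q \cdot \alpha(G)$, obtained by taking the full fiber over an independent set of $G$; this is far too weak to yield any amplification. The crux is to show, via a carefully chosen $H^{+q}$-free sub-hypergraph of $G^{+q}$ (obtained by probabilistic sparsification or by restricting to an algebraically structured subset of $V(G) \times \FF_q$), that the independence number stays essentially proportional to $\alpha(G)$ despite the $q$-fold blow-up in vertex count. Designing this step so that each iteration produces a genuine multiplicative gain in $\log|V(G_t)|/\log \alpha(G_t)$, while respecting both linearity and $H^{+q}$-freeness, is the heart of the argument and, I expect, where the most delicate combinatorial-probabilistic work is required.
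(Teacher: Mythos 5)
First, note that the paper does not prove this statement at all: it is imported verbatim as \cite[Theorem 1.4]{2404.02021} and used only as the base case of the induction behind the main theorem, so there is no internal proof to compare against. Judged on its own merits, your proposal is a plan rather than a proof: the step you yourself identify as ``the heart of the argument'' --- keeping the independence number from growing proportionally under the blow-up --- is exactly what is missing, and it is missing for a structural reason, not just for lack of detail. In the $\FF_q$-blow-up you define, the fiber $S\times\FF_q$ over \emph{any} independent set $S$ of $G$ (indeed over any single vertex) spans no edges, since every edge of $G^{+q}$ projects to an edge of $G$ with three distinct base vertices. Hence $\alpha(G^{+q})\ge q\,\alpha(G)$ automatically, so each iteration adds the same quantity $\log q$ to both $\log|V|$ and $\log\alpha$; the polynomial relation $\log N=O(\log\alpha)$ of the starting pair is preserved no matter how many iterations you perform or which primes you choose, and quasi-polynomial growth $\log N\ge(\log\alpha)^C$ is unreachable. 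The escape routes you gesture at cannot repair this: deleting edges (sparsification) only increases the independence number, and deleting vertices to avoid lifted independent sets costs essentially all of the vertex gain, since any subset containing a constant fraction of $V(G)\times\FF_q$ still meets the fiber over a maximum independent set of $G$ in $\Omega(q\,\alpha(G))$ vertices. Separately, the claim that $G^{+q}$ ``inherits $H^{+q}$-freeness by projecting to first coordinates'' is unjustified: a copy of $H^{+q}$ in $G^{+q}$ need not project injectively, and even when it does, the projection is a copy of (the underlying) $H^{+q}$ in $G$, which is not forbidden by $H$-freeness of $G$; so freeness does not propagate through the iteration as stated.

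For orientation, the actual argument in \cite{2404.02021} (and the higher-uniformity machinery of Sections~2--3 of the present paper, which is modeled on it) works quite differently: one builds a single highly structured host coloring (of product/stepping-up type, with quasi-polynomially many vertices and small independence number by design), and then chooses the linear $3$-graph $H$ \emph{randomly}, with the probabilistic argument showing that $H$ can be taken to contain a suitably ordered monochromatic configuration under every relevant partition/ordering, which is precisely what guarantees the structured host is $H$-free. In other words, the freedom is spent on designing $H$ around a fixed construction, rather than on amplifying a fixed pair $(H,G)$; some mechanism of this kind (or the girth Ramsey theorem, as remarked at the end of the paper) appears to be genuinely necessary, and your proposal does not yet contain a substitute for it.
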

In particular, this lends some credence to the conjecture of \cite{2411.13812}: since linear hypergraphs need not have polynomial off-diagonal Ramsey numbers, perhaps the only relevant structure is that of being iterated $k$-partite. 

Our main result is a generalization of \cref{thm:linear LB 3-uniform} to arbitrary uniformities. 
\begin{theorem}\label{thm:main}
    For every $C>1$ and every $k \geq 3$, there exists a linear $k$-graph $H$ such that
    \[
        r(H,K_n\up k) \geq \twr_{k-2}(2^{(\log n)^C})
    \]
    for all sufficiently large $n$. 
\end{theorem}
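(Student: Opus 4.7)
I would prove \Cref{thm:main} by induction on $k \geq 3$, with the base case $k=3$ supplied by \Cref{thm:linear LB 3-uniform}. The inductive step reduces to a \emph{linearity-preserving stepping-up lemma} of the following shape: given a linear $k$-graph $H$ and an $H$-free $k$-graph $G$ on $N$ vertices with independence number less than $n$, produce a linear $(k+1)$-graph $H^*$ (depending only on $H$), together with an $H^*$-free $(k+1)$-graph $G^*$ on $2^N$ vertices with independence number less than $n^{O_H(1)}$. Iterating this step $k-3$ times starting from the bound in \Cref{thm:linear LB 3-uniform} yields the claimed tower growth; the polynomial blow-up in the independence number at each iteration is absorbed by a slight relabeling of the exponent $C$.

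The stepping-up construction itself would follow the classical Erd\H os--Hajnal--Rado template. Order $V(G^*) = \{0,1\}^N$ lexicographically, and for any ordered $(k+1)$-tuple $v_0 < v_1 < \cdots < v_k$ define the first-difference positions $\delta_i := \min\{j : v_{i-1}(j) \neq v_i(j)\}$ for $i=1,\ldots,k$. Declare this tuple an edge of $G^*$ exactly when $\delta_1 < \delta_2 < \cdots < \delta_k$ and $\{\delta_1,\ldots,\delta_k\} \in E(G)$. Two claims then suffice. First, $\alpha(G^*) \leq n^{O_H(1)}$: applying an Erd\H os--Szekeres-type argument to the sequence of consecutive $\delta$-values inside any large independent set $I \subseteq V(G^*)$ produces a long monotone subsequence of $\delta$'s, whose associated positions in $[N]$ must form an independent set in $G$ (else some $(k+1)$-subtuple of $I$ would be an edge of $G^*$), and hence have size less than $n$. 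Second, $G^*$ is $H^*$-free for a suitable linear $(k+1)$-graph $H^*$, which one builds by adjoining one or more fresh auxiliary vertices to each edge of $H$. Linearity of $H^*$ follows from linearity of $H$ together with the distinctness of the auxiliary vertices, and the reduction to $H$-freeness of $G$ is mediated by projecting any hypothetical copy of $H^*$ in $G^*$ through the $\delta$-map.

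The main obstacle lies in the precise design of $H^*$. The naive single-anchor construction $H^* = \{e \cup \{w_e\} : e \in E(H)\}$ has two weaknesses: overlapping edges of $H$ may, after lifting, share too many vertices in $G^*$, and the $\delta$-patterns across different edges of a putative copy of $H^*$ in $G^*$ need not be mutually consistent, so the projection through the $\delta$-map may fail to produce a genuine copy of $H$ in $G$. Overcoming these difficulties will require a more refined $H^*$, probably with several auxiliary vertices per edge and carefully chosen adjacencies, engineered so that the strict monotonicity $\delta_1 < \cdots < \delta_k$ on each edge of $G^*$ is forced to propagate consistently across overlapping edges of $H^*$. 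Identifying the correct combinatorial shape of $H^*$ that remains linear while still triggering the reduction to $H$-freeness is where I expect the bulk of the technical work to lie.
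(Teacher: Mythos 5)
Your high-level skeleton matches the paper (induction on $k$ with \cref{thm:linear LB 3-uniform} as the base case and a linearity-preserving stepping-up as the inductive step, exactly as in \cref{prop:step up linear}), but the plan has two genuine gaps. First, the stepping-up graph $G^*$ you propose -- edges are only the $(k+1)$-tuples with \emph{strictly increasing} $\delta$-pattern whose $\delta$-set lies in $E(G)$ -- does not have small independence number. Any set of vertices whose consecutive $\delta$-sequence is decreasing (e.g.\ the strings $10\cdots0,\,110\cdots0,\,1110\cdots 0,\dots$) contains no $(k+1)$-subset with an increasing pattern, so it is independent in $G^*$ no matter what $G$ is, and it has size about $N=r(H,K_n^{(k-1)})-1$, vastly larger than $\mathrm{poly}(n)$. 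To fix this one must also make edges out of decreasing patterns (tied to a reversed copy of $G$) \emph{and} out of suitable non-monotone patterns; the paper's construction is the stepping-up of $(G,G,\{T_{2,k-2},T_{k-2,2}\})$, and it is precisely the non-monotone types that yield the linear bound $\alpha(\Tilde G)\le 2\alpha(G)+2k$ via \cref{lemma:independence-upper-bound,lemma:family-of-depth-1}. Your Erd\H os--Szekeres sketch also glosses over a real issue even once both monotone directions are included: a monotone subsequence of the \emph{consecutive} $\delta$'s does not correspond to a subset of the independent set with that $\delta$-set, since the pairwise $\delta$ of non-consecutive vertices is a maximum of consecutive $\delta$'s; the paper circumvents this entirely through the function $f(n_1,n_2,\cT)$ rather than through Erd\H os--Szekeres.

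Second, and more importantly, you explicitly leave open the construction of the linear $(k+1)$-graph $H^*$, and that is where essentially all of the paper's work lies (Section 3, i.e.\ \cref{thm:linear-stepping-up,lem:one shot construction,lem:oriented s-uniform}). The paper does use the single-anchor expansion $H^+$ of $H$ (one new vertex per edge, placed first in a chosen ordering), but the target graph $H'$ is not $H^+$ itself: it is obtained by placing ordered copies of $F^<=H^+$ inside the edges of a randomly constructed linear oriented $s$-graph $H_0$ with a partite-Ramsey-type property -- every dyadic partition of $V(H_0)$, every two-coloring of its parts, and every vertex ordering admits an ordered monochromatic transversal edge. The dyadic-partition machinery (\cref{lemma:monochromatic-greedy,lemma:dyadic lower bound}) is exactly what forces the $\delta$-patterns of a hypothetical copy in $\Tilde G$ to be consistent across overlapping edges, which is the obstacle you correctly identify but do not resolve; and the non-monotone types in $\cT$ are what guarantee that each lifted edge, having increasing binary structure, must come from the left stepping-up of $G$ and hence projects to an edge of $H$ in $G$. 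Without both of these ingredients the proposal does not yet constitute a proof.
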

We note that the tower height in \cref{thm:main} is nearly best possible, by \eqref{eq:twr UB}. We also remark that, if we let $K_{n,\dots,n}\up k$ denote the complete $k$-partite $k$-graph with parts of size $n$, then a simple supersaturation argument (see \cite[Proposition 7.2]{MR4332486}) demonstrates that $r(H,K_{n,\dots,n}\up k) \leq n^{O_H(1)}$ for any linear $k$-graph $H$. Thus, \cref{thm:main} implies that $r(H,K_n\up k)$ and $r(H,K_{n,\dots,n}\up k)$ can be extremely far apart, in that the latter is polynomial and the former grows as a tower of height at least $k-2$. Previously no such separation was known for any $H$; in particular, if $H$ is a fixed clique, then $r(H,K_n\up k)$ and $r(H,K_{n,\dots,n}\up k)$ are expected to be of roughly the same order.

In order to prove \cref{thm:main}, we need to both define the linear $k$-graph $H$ and to define a coloring of $E(K_N\up k)$ containing no red copy of $H$ and no large blue clique. As in many previous works, our coloring is based on the stepping up construction of Erd\H os, Hajnal, and Rado \cite{MR0202613}. In particular, our technique takes as input a linear $(k-1)$-graph $F$, and outputs a linear $k$-graph $H$ whose off-diagonal Ramsey number is exponential in that of $F$, as stated in the following proposition.
\begin{proposition}\label{prop:step up linear}
    Let $k \geq 4$ and let $H$ be a linear $(k-1)$-graph. There exists a linear $k$-graph $H'$ such that
    \[
        r(H',K_{2n+2k}\up k) > 2^{r(H,K_n\up{k-1})-1}.
    \]
\end{proposition}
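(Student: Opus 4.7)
The plan is to adapt the Erd\H{o}s--Hajnal--Rado stepping-up construction \cite{MR0202613} to produce linear hypergraphs, following the scheme developed in \cite{2404.02021} for the $k = 3$ case. Set $N \eqdef r(H, K_n\up{k-1}) - 1$ and fix a red/blue coloring $\chi$ of $\binom{[N]}{k-1}$ with no red copy of $H$ and no blue $K_n\up{k-1}$. I would identify the vertex set of $K_{2^N}\up{k}$ with $\{0,1\}^N$, ordered lexicographically, and write $\delta(u,v) \eqdef \max\{j : u_j \ne v_j\}$ for distinct $u, v$. For $u_1 < \cdots < u_k$ set $\delta_i \eqdef \delta(u_i, u_{i+1})$; I will use the standard fact that consecutive $\delta_i$ are distinct. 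The coloring $\chi'$ of $\binom{\{0,1\}^N}{k}$ is defined so that $\{u_1, \ldots, u_k\}$ is colored by $\chi(\{\delta_1, \ldots, \delta_{k-1}\})$ when $\delta_1 < \cdots < \delta_{k-1}$, and receives a color determined by the sign-pattern of its first local extremum otherwise, in the standard manner designed to preclude large blue cliques with non-monotone $\delta$-sequences.

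For the blue side, any monochromatic blue clique $u_1 < \cdots < u_{2n+2k}$ in $\chi'$ must, by the rule for non-monotone configurations, have a $\delta$-sequence avoiding too many local extrema, and hence (by Erd\H{o}s--Szekeres or a direct pigeonhole) admitting a monotone increasing subsequence of length at least $n+1$, say $\delta_{i_1} < \cdots < \delta_{i_{n+1}}$. Because the $\delta$-value between any two vertices in an increasing run equals the maximum of the intermediate $\delta_i$'s, every $(k-1)$-subset of $\{\delta_{i_1}, \ldots, \delta_{i_{n+1}}\}$ appears as the $\delta$-sequence of some blue $k$-set in our clique; hence it is a blue $K_n\up{k-1}$ in $\chi$, contradicting the choice of $\chi$. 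The $2n+2k$ slack covers the standard losses in extracting the monotone subsequence, together with boundary terms.

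For the red side, the main task is to define the linear $k$-graph $H'$ and show that a red copy of $H'$ in $\chi'$ would yield a red copy of $H$ in $\chi$. The construction takes $V(H') = V(H) \cup \bigcup_{e \in E(H)} W_e$, where the $W_e$ are pairwise disjoint sets of auxiliary vertices attached to each edge of $H$; for each $e \in E(H)$, $H'$ contains a carefully chosen collection of $k$-edges built from the vertices of $e$ and the vertices of $W_e$. Linearity of $H'$ will follow from the linearity of $H$: any two $k$-edges associated to distinct $e, e' \in E(H)$ meet only in $e \cap e'$, which has size at most one, while the $k$-edges associated with the same $e$ will be configured to pairwise share at most one vertex as well. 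Given a red embedding $\phi$ of $H'$ into $\chi'$, the structure of $H'$ is designed to force the ``monotone'' rule of $\chi'$ to apply to each of its $k$-edges, thereby yielding, for each $v \in V(H)$, a well-defined value $\Phi(v) \in [N]$ (obtained by reading off the appropriate $\delta$-value on the $k$-edges of $H'$ incident to $v$) with the property that $\Phi(e)$ is a red $(k-1)$-edge of $\chi$ for every $e \in E(H)$; the collection of these edges is the sought red copy of $H$ in $\chi$.

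The main obstacle is the design of the $k$-edges within each $W_e$ and their interaction with $V(H)$ so that both (i) the linearity of $H'$ is preserved and (ii) the map $\Phi$ is consistent, meaning the $\delta$-values read off at a vertex $v \in V(H)$ from the $k$-edges of different $e \ni v$ all coincide. These two constraints pull in opposite directions, since consistency favors sharing vertices across $k$-edges while linearity favors keeping them separate. The linearity of $H$ is precisely what makes the balance achievable: since any two edges of $H$ meet in at most one vertex, the $k$-edges of $H'$ that must be coordinated for consistency are few, and a careful choice of auxiliary configurations suffices. The detailed construction extends the linear stepping-up scheme of \cite{2404.02021} to higher uniformity.
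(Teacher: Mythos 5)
The blue side of your sketch is essentially the standard stepping-up argument and matches what the paper does (the paper phrases it via \cref{lemma:independence-upper-bound} and \cref{lemma:family-of-depth-1}, taking the non-monotone red rule to consist only of the types $T_{2,k-2}$ and $T_{k-2,2}$, which yields $\alpha(\Tilde{G})\le 2\alpha(G)+2k$). The problem is the red side: your proposal never actually constructs $H'$ or proves that a red copy of $H'$ forces a red copy of $H$, and this is the entire content of the paper. You describe a local gadget construction --- disjoint auxiliary sets $W_e$ attached to each edge $e$ of $H$, with ``carefully chosen'' $k$-edges inside $e\cup W_e$ --- and then acknowledge that linearity and consistency of the read-off map $\Phi$ ``pull in opposite directions,'' asserting without argument that ``a careful choice of auxiliary configurations suffices.'' That assertion is precisely the missing idea. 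A fixed per-edge gadget cannot, by itself, force the monotone colouring rule to apply: an adversarial embedding of $H'$ into the stepped-up hypergraph may place the gadget's vertices so that the relevant $k$-sets have non-monotone binary structure (and are then coloured by the extremum-pattern rule, irrespective of $\chi$), or so that they are realized by decreasing rather than increasing patterns, in which case the $\delta$-values you read off need not assemble into an edge of $\chi$ at all. Your construction must be robust against \emph{every} way the embedded vertex set can sit inside $\{0,1\}^N$, i.e.\ against every possible binary splitting structure, and nothing in your outline addresses this.

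The paper's resolution is genuinely different in nature and is not a routine extension of the $k=3$ scheme you cite. It first reduces (\cref{thm:linear-stepping-up}) to a Ramsey-type property of $H'$ (\cref{lem:one shot construction}): for \emph{every} dyadic partition of $V(H')$, every two-colouring of its parts by $\{L,R\}$, and every ordering of $V(H')$, there must exist an ordered monochromatic transversal copy of a fixed ordered graph $F^<$, taken to be an \emph{ordered expansion} $H^+$ of $H$ (one new vertex per edge, placed first in its edge). The dyadic partition, colouring, and ordering encode exactly how an embedded copy of $H'$ splits inside the binary tree structure of $\{0,\dots,2^N-1\}$, and the monochromatic ordered transversal copy of $H^+$ is what guarantees that the relevant $k$-sets are increasing (or all decreasing), so that the map $\varphi^*(v)=\ell(B_{i(\varphi(v))-1})$ is automatically well defined and sends edges of $H$ to edges of $G$; there is no per-edge consistency issue to negotiate. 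Constructing a \emph{linear} $H'$ with this universal property is itself nontrivial: the paper does it probabilistically (\cref{lem:oriented s-uniform}), taking a sparse random oriented $s$-graph with $s=|V(H^+)|$, union-bounding over all $(2n)^{2n}$ choices of partition/colouring/ordering, and using Harris's inequality to retain linearity, then planting a copy of $F^<$ in each oriented edge. Without an argument of this kind (or the girth Ramsey machinery mentioned in the paper's concluding remarks), your red-side step remains a placeholder, so the proposal as written has a genuine gap at its core.
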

Note that \cref{prop:step up linear} immediately implies \cref{thm:main} by induction on $k$, with \cref{thm:linear LB 3-uniform} serving as the base case.

As discussed above, the coloring used in the proof of \cref{prop:step up linear} is based on the stepping up technique. Although variants of this technique appear frequently in the literature, most papers simply record certain ad hoc properties needed for the particular application. We therefore present in \cref{sec:stepping up} a slightly more general treatment of the properties of this construction, in the hopes that future researchers can use some of these in a black-box way. We then present the construction of the linear $k$-graph $H$ in \cref{sec:linear}, and complete the proof of \cref{prop:step up linear}, and thus of \cref{thm:main}.

\section{Stepping up}\label{sec:stepping up}
In classical stepping-up constructions, given a $(k-1)$-graph $G$ on $\{0,\ldots,N-1\}$, one constructs a $k$-graph $\Tilde{G}$ whose vertices are all vectors $\varepsilon=(\varepsilon_{N-1},\varepsilon_{N-2},\ldots, \varepsilon_0)\in \{0,1\}^N$, ordered lexicographically.
One also considers, for any two distinct strings $\varepsilon,\varepsilon'$, the largest index $\delta=\delta(\varepsilon,\varepsilon')$ where they differ.
Observe that $\varepsilon<\varepsilon'$ if and only if $\varepsilon_{\delta}<\varepsilon'_{\delta}$ where $\delta = \delta(\varepsilon, \varepsilon')$.
Finally, for any $k$ strings $\varepsilon^{(1)}<\cdots <\varepsilon^{(k)}$, we decide whether they form an edge in $\Tilde{G}$ by some deterministic rule about the tuple $(\delta_1,\ldots,\delta_{k-1})$ where $\delta_i =\delta(\varepsilon^{(i)},\varepsilon^{(i+1)})$.
The exact rule depends on the application, though typically if $\delta_1,\ldots, \delta_{k-1}$ is monotone, then $\{\varepsilon^{(1)},\ldots, \varepsilon^{(k)}\}$ is an edge if and only if $\{\delta_1,\ldots,\delta_{k-1}\}$ is an edge in $G$.

In this paper, we switch to a new set of notations that represents each vertex in the stepping-up construction with a single number instead of a string.
We will also rephrase the conditions on $\delta_1,\ldots, \delta_{k-1}$ in terms of binary trees, which should allow one to visualize the constructions better.

First of all, suppose that $a,a'$ are two distinct numbers in $\{0,1,\ldots, 2^N-1\}$. If their binary representations are $\varepsilon,\varepsilon'$, then it is clear that $a<a'$ if and only if $\varepsilon<\varepsilon'$.
More importantly, $\delta(\varepsilon,\varepsilon')$ is simply the largest non-negative integer $\delta$ such that $\lfloor 2^{-\delta}a\rfloor \neq \lfloor 2^{-\delta}a'\rfloor$.
We generalize this observation to the definition below.
\begin{definition}
    Let $S$ be a set of at least two positive integers.
    The \emph{top splitting level} $\ell(S)$ of $S$ is the maximum non-negative integer $\ell$ such that $\{\lfloor 2^{-\ell}s\rfloor:s\in S\}$ has size at least two.
    The \emph{left subset} $S_{\Left}$ of $S$ is the set of $s\in S$ with $\lfloor2^{-\ell(S)}s\rfloor$ even, and the \emph{right subset} $S_{\Right}$ of $S$ is the set of $s\in S$ with $\lfloor2^{-\ell(S)}s\rfloor$ odd.
\end{definition}
For example, if $S=\{5,6,7,8,9\}$, then $\ell(S)=3$, $S_{\Left}=\{5,6,7\}$, and $S_{\Right}=\{8,9\}$.
Note that,
by the definition of $\ell(S)$, for any $s\in S$, there exists an integer $q_s$ and an integer $r_s$ such that $s=q_s2^{\ell(S)+1}+r_s$ where $0\le r_s< 2^{\ell(S)+1}$. Thus $\lfloor2^{-\ell(S)}s\rfloor$ is either $2q_s$ or $2q_s+1$, depending on whether $r_s\le 2^{\ell(S)}$ or not. Hence, $S_{\Left}$, $S_{\Right}$ are well-defined and always non-empty.

To get the full list of $\delta_1,\ldots, \delta_{k-1}$, we need to iteratively split the subsets into left subsets and right subsets until they all have size $1$.
If we record how the subsets split using a binary tree, we get the following definition.

\begin{definition}
    For any set of non-negative integers $S$, its \emph{binary structure} $b(S)$ is a weighted rooted ordered binary tree defined iteratively as follows.
    \begin{enumerate}
        \item If $\abs{S}=1$, then $b(S)$ is a single root vertex with weight $1$.
        \item If $\abs{S}>1$, let $b(S)$ be the binary tree so that the left subtree of the root is $b(S_{\Left})$, and the right subtree of the root is $b(S_{\Right})$. 
        The weight of the root is $\abs{S}$.
    \end{enumerate}

    Note that the weight of any vertex $v$ in $b(S)$ is equal to the number of leaves in the subtree rooted at $v$.
\end{definition}

For example, the following figure is the binary structure of $S=\{5,6,7,8,9\}$.
To compute the $\delta$'s, we can simply read the internal nodes from left to right and see which level they split.
For example, for $\varepsilon^{(1)} = (0,1,0,1)$, $\varepsilon^{(2)}=(0,1,1,0)$, $\ldots$, $\varepsilon^{(5)} = (1,0,0,1)$, we have $(\delta_1,\delta_2,\delta_3,\delta_4) = (1,0,3,0)$.

\begin{figure}[ht]
    \centering
\begin{tikzpicture}[
    scale=0.8,
    node distance = 1.5cm, 
    treenode/.style = {circle, draw, minimum size=1.2em, inner sep=1pt, fill=white},
    leafsiblingdist/.style = {xshift=0.75cm}, 
    line_label/.style = {left=0.1cm, anchor=east}
]


\node [treenode] (root) at (0,1) {5};

\node [treenode] (n3) at (-1.5,-1) {3};
\node [treenode] (n2_right) at (1.5, -2) {2};

\node [treenode] (n2_middle) at (-0.5,-2) {2};

\coordinate (LeafLevelY) at ($(n2_middle.south) - (0,1.5cm)$);

\node [treenode] (n1_farleft)   at (-2,-3) {1};
\node [treenode] (n1_midleft)   at (-1,-3) {1}; 
\node [treenode] (n1_midright)  at (0,-3) {1}; 
\node [treenode] (n1_nearright) at (1,-3) {1}; 
\node [treenode] (n1_farright)  at (2,-3) {1}; 

\draw (root) -- (n3);
\draw (root) -- (n2_right);
\draw (n3)   -- (n1_farleft); 
\draw (n3)   -- (n2_middle);
\draw (n2_middle) -- (n1_midleft);
\draw (n2_middle) -- (n1_midright);
\draw (n2_right)  -- (n1_nearright);
\draw (n2_right)  -- (n1_farright);

\node [below=0.1cm of n1_farleft]   (lab5) {5};
\node [below=0.1cm of n1_midleft]   (lab6) {6};
\node [below=0.1cm of n1_midright]  (lab7) {7};
\node [below=0.1cm of n1_nearright] (lab8) {8};
\node [below=0.1cm of n1_farright]  (lab9) {9};


\draw[dashed] (-3,1) -- (-0.3,1) node[line_label, pos=0] {$\ell=3$};
\draw[dashed] (0.3,1) -- (3,1);
\draw[dashed] (-3,0) -- (3,0) node[line_label, pos=0] {$\ell=2$};
\draw[dashed] (-3,-1) -- (-1.8,-1) node[line_label, pos=0] {$\ell=1$};
\draw[dashed] (-1.2,-1) -- (3,-1);
\draw[dashed] (-3,-2) -- (-0.8,-2) node[line_label, pos=0] {$\ell=0$};
\draw[dashed] (-0.2,-2) -- (1.2,-2);
\draw[dashed] (1.8, -2) -- (3,-2);

\end{tikzpicture}
    \caption{Binary structure of $S=\{5,6,7,8,9\}$}
    \label{fig:BinaryStructure56789}
\end{figure}
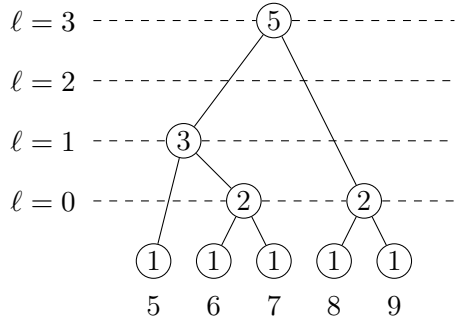

As mentioned earlier, the case where $\delta_1,\ldots, \delta_{k-1}$ is monotone is usually special.
Rephrasing this in terms of binary structures gives the following definition.

\begin{definition}
    The binary structure $b(S)$ is \emph{increasing} if all right subtrees of internal nodes are singletons, and it is \emph{decreasing} if all left subtrees of internal nodes are singletons.
    The binary structure is \emph{monotone} if it is increasing or decreasing.
    If $S$ is a set of non-negative integers whose binary structure is monotone, let $L(S)$ be iteratively defined as follows.
    \begin{enumerate}
        \item If $\abs{S}=1$, let $L(S)$ be the empty set.
        \item If $\abs{S}>1$, let $S'$ be $S_{\Left}$ if $b(S)$ is increasing, and let $S'$ be $S_{\Right}$ if $b(S)$ is decreasing.
        Then set $L(S) \eqdef L(S')\cup \{\ell(S)\}$.
    \end{enumerate}
\end{definition}
For example, $\{5,6,7,8,9\}$ does not have a monotone binary structure, whereas $\{1,2,4,8,16\}$ is decreasing with $L(\{1,2,4,8,16\}) = \{0,1,2,3\}$. Note that if $S$ is monotone, then $L(S)$ is simply the set of levels of the internal nodes in the binary structure of $S$.

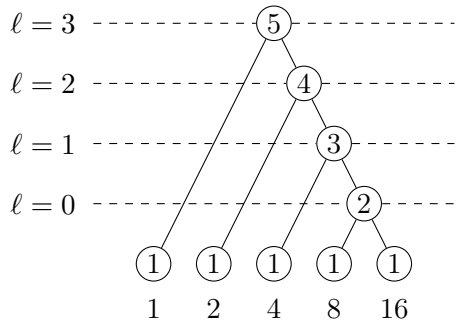
\begin{figure}[ht]
    \centering
\begin{tikzpicture}[
    scale=0.8,
    node distance = 1.5cm, 
    treenode/.style = {circle, draw, minimum size=1.2em, inner sep=1pt, fill=white},
    leafsiblingdist/.style = {xshift=0.75cm}, 
    line_label/.style = {left=0.1cm, anchor=east}
]


\node [treenode] (root) at (0,0) {5};

\node [treenode] (n4) at (0.5,-1) {4};

\node [treenode] (n3) at (1, -2) {3};

\node [treenode] (n2) at (1.5,-3) {2};

\coordinate (LeafLevelY) at ($(n2_middle.south) - (0,1.5cm)$);

\node [treenode] (n1_farleft)   at (-2,-4) {1};
\node [treenode] (n1_midleft)   at (-1,-4) {1}; 
\node [treenode] (n1_midright)  at (0,-4) {1}; 
\node [treenode] (n1_nearright) at (1,-4) {1}; 
\node [treenode] (n1_farright)  at (2,-4) {1}; 

\draw (root) -- (n4);
\draw (root) -- (n1_farleft);
\draw (n4)   -- (n3); 
\draw (n4)   -- (n1_midleft);
\draw (n3) -- (n2);
\draw (n3) -- (n1_midright);
\draw (n2)  -- (n1_nearright);
\draw (n2)  -- (n1_farright);

\node [below=0.1cm of n1_farleft]   (lab5) {1};
\node [below=0.1cm of n1_midleft]   (lab6) {2};
\node [below=0.1cm of n1_midright]  (lab7) {4};
\node [below=0.1cm of n1_nearright] (lab8) {8};
\node [below=0.1cm of n1_farright]  (lab9) {16};


\draw[dashed] (-3,0) -- (-0.3,0) node[line_label, pos=0] {$\ell=3$};
\draw[dashed] (0.3,0) -- (3,0);
\draw[dashed] (-3,-1) -- (0.2,-1) node[line_label, pos=0] {$\ell=2$};
\draw[dashed] (0.8,-1) -- (3,-1);
\draw[dashed] (-3,-2) -- (0.7,-2) node[line_label, pos=0] {$\ell=1$};
\draw[dashed] (1.3,-2) -- (3,-2);
\draw[dashed] (-3,-3) -- (1.2,-3) node[line_label, pos=0] {$\ell=0$};
\draw[dashed] (1.8,-3) -- (3,-3);

\end{tikzpicture}
    \caption{Binary structure of $S=\{1,2,4,8,16\}$}
    \label{fig:BinaryStructure124816}
\end{figure}

Finally, instead of classifying $k$-tuples using the relative order of $\delta_1,\ldots, \delta_{k-1}$, we classify them based on types of their structures.

\begin{definition}
    A \emph{type of binary structure} $T$ is a weighted rooted ordered binary tree such that each leaf vertex has a positive integer weight, and each internal vertex has weight equal to the sum of the weights of its children. 
    Its \emph{size} is the weight of its root.
    A set of positive integers $S$ is \emph{of type $T$} if $T$ can be obtained from $b(S)$ by iteratively removing leaves.
    The type $T$ is \emph{monotone} if there is some set of positive integer $S$ of type $T$ with $b(S)$ monotone.
\end{definition}

Before we introduce our reformulation of stepping-up, let us first review the classical construction given by Erd\H{o}s, Hajnal and Rado \cite{MR0202613}.
For simplicity, assume that $k=4$, and we are given a $3$-graph $G$ on $N$ vertices with small independence number and clique number.
The stepping-up of $G$ is a $4$-graph on $2^N$ vertices defined as follows: for any $\{\varepsilon^{(1)},\ldots,\varepsilon^{(4)}\}$, include it as an edge if one of the following holds:
\begin{itemize}
    \item $\delta_1,\delta_2,\delta_3$ is monotone and forms an edge in $G$.
    \item $\delta_1<\delta_2>\delta_3$.
\end{itemize}
Using our terminology, the first condition is satisfied for some set $S$ of size $4$ precisely when $b(S)$ is monotone and $L(S)$ is an edge in $G$.
With some thought, we can see that the second condition is precisely when $b(S)$ is of type $T_{2,2}$, where $T_{2,2}$ is the binary tree with weight $4$ at the root and weight $2$ at both of its children, both of which are leaves.

\begin{figure}[ht]
    \centering
\begin{tikzpicture}[
    scale=0.8,
    node distance = 1.5cm, 
    treenode/.style = {circle, draw, minimum size=1.2em, inner sep=1pt, fill=white},
    leafsiblingdist/.style = {xshift=0.75cm}, 
    line_label/.style = {left=0.1cm, anchor=east}
]


\node [treenode] (root) at (0,0) {4};

\node [treenode] (n2_left) at (-0.5,-1) {2};
\node [treenode] (n2_right) at (0.5, -1) {2};

\draw (root) -- (n2_left);
\draw (root) -- (n2_right);
\end{tikzpicture}
    \caption{Type $T_{2,2}$}
    \label{fig:BinaryStructureTypeT22}
\end{figure}
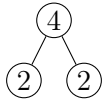
Our variant of stepping-up will also include two such conditions, which we now define in turn.
We begin with the monotone edges.
For our convenience later, we will actually ``flip $G$'' when we step up for the decreasing edges.

\begin{definition}
    Let $G$ be a $(k-1)$-graph on $\{0,1,\ldots, N-1\}$.
    Its \emph{left stepping-up} is the $k$-graph on $\{0,\ldots, 2^N-1\}$ consisting of all edges $e$ with $b(e)$ increasing and $L(e)\in E(G)$.
    Its \emph{right stepping-up} is the $k$-graph on $\{0,\ldots, 2^N-1\}$ consisting of all edges $e$ with $b(e)$ decreasing and $\{N-1-\ell: \ell\in L(e)\}\in E(G)$.
\end{definition}

Now we can define the stepping-up that we will use throughout the paper.

\begin{definition}
    For any tuple $(G_1,G_2,\cT)$ where $G_1,G_2$ are two $(k-1)$-graphs on $\{0,1,\ldots, N-1\}$ and $\cT$ is a family of types of binary structure of size $k$, its \emph{stepping up} is the $k$-graph $G$ on vertex set $\{0,\ldots, 2^N-1\}$ formed by taking the edge union
    \[G \eqdef \Tilde{G_1}\cup \Tilde{G_2}\cup G_{\cT}\]
    where $\Tilde{G_1}$ is the left stepping-up of $G_1$, $\Tilde{G_2}$ is the right stepping-up of $G_2$, and $G_{\cT}$ is the $k$-graph with 
    \[E(G_\cT) = \left\{e\in \binom{\{0,\ldots, 2^N-1\}}{k}: e\textup{ is of type }T\textup{ for some }T\in \cT\right\}.\]
\end{definition}
For example, the classical stepping-up construction discussed above for $k=4$ is precisely the stepping up of $(G,\textup{rev}(G),\{T_{2,2}\})$ where $\textup{rev}(G)$ is the hypergraph with the vertices reversed.
Note that the complement can also be described as a stepping up: it is the stepping up of $(G^c, \textup{rev}(G^c), \{T_{1,(2,1)},T_{(1,2),1}\})$ where $T_{1,(2,1)}$ and $T_{(1,2),1}$ are drawn in \cref{fig:BinaryStructureTypeT121} below.

\begin{figure}[ht]
    \centering
\begin{tikzpicture}[
    scale=0.8,
    node distance = 1.5cm, 
    treenode/.style = {circle, draw, minimum size=1.2em, inner sep=1pt, fill=white},
    leafsiblingdist/.style = {xshift=0.75cm}, 
    line_label/.style = {left=0.1cm, anchor=east}
]


\node [treenode] (root1) at (0,0) {4};

\node [treenode] (n1_left1) at (-1,-1) {1};
\node [treenode] (n31) at (1,-1) {3};
\node [treenode] (n21) at (0.5, -2) {2};
\node [treenode] (n1_right1) at (1.5, -2) {1};

\node [treenode] (root2) at (4,0) {4};

\node [treenode] (n1_left2) at (2.5,-2) {1};
\node [treenode] (n32) at (3,-1) {3};
\node [treenode] (n22) at (3.5, -2) {2};
\node [treenode] (n1_right2) at (5, -1) {1};

\draw (root1) -- (n1_left1);
\draw (root1) -- (n31);
\draw (n31) -- (n21);
\draw (n31) -- (n1_right1);
\draw (root2) -- (n1_right2);
\draw (root2) -- (n32);
\draw (n32) -- (n22);
\draw (n32) -- (n1_left2);
\end{tikzpicture}
    \caption{Type $T_{1,(2,1)}$ and Type $T_{(1,2),1}$}
    \label{fig:BinaryStructureTypeT121}
\end{figure}
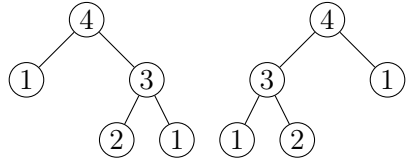

Having rephrased the classical construction, we now turn to bounding its independence number, recalling that our ultimate goal is to construct a $k$-graph with no large independent sets.
To bound the independence number of the stepped-up hypergraph, we define the following auxiliary functions.

\begin{definition}
    For any positive integers $n_1, n_2$ and any family $\cT$ of types of binary structure, let $f(n_1,n_2,\cT)$ be the maximum possible size of set $S$ of non-negative integers that does not contain any $n_1$-set with an increasing binary structure, any $n_2$-set with a decreasing binary structure, or any set of type $T$ for any $T\in \cT$.
\end{definition}

The following statement then follows easily from the definition.

\begin{lemma}\label{lemma:independence-upper-bound}
Let $G_1,G_2$ be two $(k-1)$-graphs on $\{0,1,\ldots, N-1\}$, and let $\cT$ be a family of types of binary structure of size $k$.
Let $G$ be the stepping up of $(G_1,G_2,\cT)$. 
Then
\[\alpha(G)\leq f\left(\alpha(G_1)+2,\alpha(G_2)+2,\cT\right).\]
\end{lemma}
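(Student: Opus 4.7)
The plan is a proof by contradiction. Suppose $I \subseteq V(G)$ is an independent set with $|I| > f(\alpha(G_1)+2, \alpha(G_2)+2, \cT)$. By the definition of $f$, the set $I$ must contain one of three forbidden configurations: (i) a subset of some type $T \in \cT$, (ii) a subset $S$ of size $\alpha(G_1)+2$ with increasing binary structure, or (iii) a subset $S$ of size $\alpha(G_2)+2$ with decreasing binary structure. Case (i) is immediate, since such a subset is by definition an edge of $G_\cT \subseteq G$, contradicting independence. The other two cases are symmetric, so I focus on (ii).

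In case (ii), a monotone binary structure on $m$ elements has exactly $m-1$ internal nodes, so $L(S) \subseteq \{0, \ldots, N-1\}$ has $\alpha(G_1)+1$ elements. Since this exceeds $\alpha(G_1)$, the set $L(S)$ contains some edge $e \in E(G_1)$. It then suffices to exhibit a $k$-element subset $S' \subseteq S$ that still has increasing binary structure and satisfies $L(S') = e$: such an $S'$ is an edge of the left stepping-up $\Tilde{G_1} \subseteq G$, contradicting the independence of $I$. Case (iii) is handled identically after applying the reversal $\ell \mapsto N-1-\ell$ that is built into the definition of the right stepping-up, turning a large subset of $L(S)$ into an edge of $G_2$.

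The crux is the subset-extraction step, which I plan to prove by a short induction on $k$. Writing $S = \{s_1 < s_2 < \cdots < s_m\}$ with increasing binary structure, the recursive definition shows that $S_{\Right}$ (the elements with odd top-bit quotient) always sits strictly above $S_{\Left}$, so in the increasing case the unique element of $S_{\Right}$ must be the maximum $s_m$. Iterating, $\delta(s_j, s_{j+1}) = \ell_j$ where $\ell_1 < \ell_2 < \cdots < \ell_{m-1}$ enumerates $L(S)$, and because the top splitting level of any subset is determined by its extremes, $\delta(s_a, s_b) = \ell_{b-1}$ for any $a < b$. A straightforward induction then yields the structural fact I need: for any sub-$k$-set $\{s_{a_1} < \cdots < s_{a_k}\} \subseteq S$, the induced binary structure is still increasing, and $L(\{s_{a_1}, \ldots, s_{a_k}\}) = \{\ell_{a_i - 1} : 2 \leq i \leq k\}$. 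Given a target edge $e = \{\ell_{j_1} < \cdots < \ell_{j_{k-1}}\}$, the choice $a_1 = j_1$ and $a_i = j_{i-1}+1$ for $i \geq 2$ therefore produces the desired $S'$.

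The only real obstacle is carefully verifying this structural lemma about sub-$k$-sets of monotone binary structures; once it is in hand, the three cases combine directly to give $\alpha(G) \leq f(\alpha(G_1)+2, \alpha(G_2)+2, \cT)$.
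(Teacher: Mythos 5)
Your proposal is correct and follows essentially the same route as the paper: an independent set of the stepping-up cannot contain a set of a type in $\cT$, an $(\alpha(G_1)+2)$-set with increasing binary structure, or an $(\alpha(G_2)+2)$-set with decreasing binary structure, so its size is bounded by $f(\alpha(G_1)+2,\alpha(G_2)+2,\cT)$. The only difference is that you spell out the subset-extraction step (producing a $k$-subset $S'$ with monotone structure and $L(S')$ equal to a prescribed $(k-1)$-subset of $L(S)$), which the paper asserts without detail; your verification of it is correct.
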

\begin{proof}
    Let $S\subseteq \{0,\ldots, 2^N-1\}$ be an independent set in $G$.
If $S$ contains an $(\alpha(G_1)+2)$-set $S'$ with an increasing binary structure, then $\abs{L(S')}=\alpha(G_1)+1>\alpha(G_1)$, hence $L(S')$ contains an edge $e'$ in $G_1$.
This shows that $S$ contains a $k$-set $e$ with $b(e)$ increasing and $L(e)=e'$, which is a contradiction.
Therefore $S$ does not contain any $(\alpha(G_1)+2)$-set $S'$ with $b(S')$ increasing.
Similarly, $S$ does not contain any $(\alpha(G_2)+2)$-set $S'$ with $b(S')$ decreasing.
Lastly, from the way $G$ is constructed, it is clear that $S$ does not contain any sets of type $T$ for any $T\in\cT$.
As a consequence, 
$\abs{S}\leq f\left(\alpha(G_1)+2,\alpha(G_2)+2,\cT\right).$
\end{proof}
In order to apply \cref{lemma:independence-upper-bound}, we also need to compute $f(n_1,n_2,\cT)$.
This can be done by dynamic programming if $\cT$ is explicitly known.
Here we carry out the computation for two particularly useful cases.

For the first case, let $T_{a,b}$ be the type of binary structure where the children of the root are leaves with weight $a$ on the left and $b$ on the right.
We show that $f(n_1,n_2,\cT)$ is linear in $n_1+n_2$ if $\cT$ contains some type of the form $T_{a,b}$.

\begin{lemma}\label{lemma:family-of-depth-1}
    Let $n_1,n_2,k$ be positive integers with $n_1,n_2\geq 2$.
    Let $\cT$ be a family of types of binary structure of size $k$ containing types of the form $T_{a,b}$ for some $a+b=k$.
    Furthermore, let $a_{\min}$ be the minimum possible value for $a$, and $b_{\min}$ be the minimum possible value for $b$.
    Then
    \[f\left(n_1,n_2,\cT\right)\leq (a_{\min}-1)(n_2-2)+(b_{\min}-1)(n_1-2)+2k.\]
\end{lemma}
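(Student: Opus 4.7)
The plan is to prove the bound by strong induction on $n_1 + n_2$, with base cases $n_1 = 2$ or $n_2 = 2$. In the base case $n_1 = 2$ (the case $n_2 = 2$ is symmetric), any two-element subset of $S$ has binary structure consisting of a root with two singleton children, making it simultaneously increasing and decreasing; this forces $\abs{S} \leq 1$, which is safely below the target $F(n_1, n_2) := (a_{\min}-1)(n_2-2) + (b_{\min}-1)(n_1-2) + 2k \geq 2k$.

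For the inductive step, let $S$ be an extremal set with $\abs{S} \geq 2$, and consider its top split $S = S_{\Left} \sqcup S_{\Right}$ with $L = \abs{S_{\Left}}$ and $R = \abs{S_{\Right}}$. The first ingredient is a propagation step that strengthens the hypotheses on the two halves: if $T \subseteq S_{\Right}$ were a decreasing subset of size $n_2 - 1$, then for any $x \in S_{\Left}$ the set $\{x\} \cup T$ has its top split at the same level $\ell(S)$ with $\{x\}$ a singleton on the left, making $\{x\} \cup T$ a decreasing $n_2$-subset of $S$, a contradiction. Thus $S_{\Right}$ has no decreasing $(n_2-1)$-subset, and it inherits the other two conditions from $S$, so $R \leq f(n_1, n_2-1, \cT)$. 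Symmetrically, $L \leq f(n_1-1, n_2, \cT)$.

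The second ingredient uses the two forbidden types $T_{a_{\min}, k-a_{\min}}$ and $T_{k-b_{\min}, b_{\min}}$, which lie in $\cT$ by the definitions of $a_{\min}$ and $b_{\min}$ (note $a_{\min}+b_{\min} \leq k$). Having $L \geq a_{\min}$ and $R \geq k - a_{\min}$ simultaneously would let us pick $a_{\min}$ elements from $S_{\Left}$ and $k - a_{\min}$ from $S_{\Right}$, producing a subset of type $T_{a_{\min}, k-a_{\min}}$ — contradiction; the second type rules out $L \geq k-b_{\min}$ together with $R \geq b_{\min}$. Unpacking the conjunction of these two disjunctive constraints yields that at least one of three cases holds: \emph{Case (a)} $L \leq a_{\min}-1$; \emph{Case (b)} $R \leq b_{\min}-1$; or \emph{Case (c)} $L \leq k-b_{\min}-1$ and $R \leq k-a_{\min}-1$. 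In Case (a), the inductive hypothesis and the identity $(a_{\min}-1) + F(n_1, n_2-1) = F(n_1, n_2)$ give $\abs{S} \leq (a_{\min}-1) + f(n_1, n_2-1, \cT) \leq F(n_1, n_2)$; Case (b) is symmetric. In Case (c), no induction is needed: $\abs{S} = L + R \leq 2k - a_{\min} - b_{\min} - 2 \leq 2k \leq F(n_1, n_2)$ directly.

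The main step likely to require care is the three-way case split from the two ``or'' conditions, together with the observation that $a_{\min} + b_{\min} \leq k$ (which ensures the combined case analysis simplifies cleanly to the three cases above). Once this is settled, everything else is routine arithmetic checking that the recurrence matches the stated formula $F(n_1, n_2)$.
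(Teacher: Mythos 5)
Your proof is correct and follows essentially the same route as the paper's: induction on $n_1+n_2$, splitting $S$ at its top level, bounding $\abs{S_{\Left}}$ and $\abs{S_{\Right}}$ via the one-element extension trick, and using the forbidden types $T_{a_{\min},k-a_{\min}}$ and $T_{k-b_{\min},b_{\min}}$ to cap the remaining case at roughly $2k$. The only differences are cosmetic: you organize the case analysis as an explicit three-way split and take $n_1=2$ or $n_2=2$ as base cases (which in fact tidies up the minor edge case $n_2-1=1$ in the paper's recursion), but the underlying argument is the same.
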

\begin{proof}
    We will proof by induction on $n_1+n_2$.
    The case $n_1+n_2=4$ holds trivially as $n_1=n_2=2$ in this case, which forces $f(n_1,n_2,\cT)=1$ as any set of size $2$ has an increasing binary structure.
    The inductive step is going to be established by the inequality
    \begin{equation}\label{eq:recursion}f(n_1,n_2,\cT)\leq \max\left\{a_{\min}-1+f(n_1,n_2-1,\cT), b_{\min}-1+f(n_1-1,n_2,\cT),2k\right\}
    \end{equation}
    Indeed, assuming the claim holds for $f(n_1,n_2-1,\cT)$ and $f(n_1-1,n_2,\cT)$, then we have
    \[a_{\min}-1+f(n_1,n_2-1,\cT) \leq (a_{\min}-1)(n_2-2)+(b_{\min}-1)(n_1-2)+2k,\]
    \[b_{\min}-1+f(n_1-1,n_2,\cT) \leq (a_{\min}-1)(n_2-2)+(b_{\min}-1)(n_1-2)+2k,\]
    and
    \[2k\leq (a_{\min}-1)(n_2-2)+(b_{\min}-1)(n_1-2)+2k.\]
    Therefore \cref{eq:recursion} and the inductive hypothesis together imply 
    \[f(n_1,n_2,\cT)\leq (a_{\min}-1)(n_2-2)+(b_{\min}-1)(n_1-2)+2k,\]
    and it remains to prove \cref{eq:recursion}.

    Suppose that $S$ is a set with no $n_1$-sets with increasing binary structures, no $n_2$-sets with decreasing binary structures, and no sets of type $T$ for any $T\in \cT$.
    Note that $S_{\Right}$ now cannot contain any $(n_2-1)$-set with decreasing binary structure: indeed, if $S'\subseteq S_{\Right}$ satisfies $\abs{S'}=n_2-1$ and $b(S')$ is decreasing, then $\{s\}\cup S'$ also has decreasing structure for any $s\in S_{\Left}$, which is a contradiction.
    Therefore 
    \[\abs{S_{\Right}}\leq f(n_1,n_2-1,\cT).\]
    Similarly
    \[\abs{S_{\Left}}\leq f(n_1-1,n_2,\cT).\]
    If $\abs{S_{\Left}}<a_{\min}$ or $\abs{S_{\Right}}<b_{\min}$, then \cref{eq:recursion} now follows immediately.
    Therefore let us now assume $\abs{S_{\Left}}\geq a_{\min}$ and $\abs{S_{\Right}}\geq b_{\min}$.
    Since $S$ does not contain any set of type $T_{a_{\min},k-a_{\min}}$ or $T_{k-b_{\min},b_{\min}}$, we see that $\abs{S_{\Right}}<k-a_{\min}$ and $\abs{S_{\Left}}<k-b_{\min}$.
    This shows that $\abs{S}<2k-a_{\min}-b_{\min}<2k$, as desired.
\end{proof}

We will use \cref{lemma:family-of-depth-1} to get a better quantitative dependence when stepping up.
However, if one is only interested in understanding the tower heights of Ramsey numbers, then it suffices to use the much cheaper bound below.

The \emph{depth} of a binary tree is the number of edges in the longest path from the root to a leaf.

\begin{lemma}\label{lemma:f-upper-bound}
Let $k,d$ be two non-negative integers.
If $T$ is a type of binary structure of size $k$ and depth $d$, then
\[f(n_1,n_2,\{T\})=O_k( (n_1+n_2)^{d}).\]
\end{lemma}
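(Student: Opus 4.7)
The plan is to induct on the depth $d$. For the base case $d=0$, a depth-$0$ type $T$ is a single leaf of weight $k$, and every $k$-subset is of type $T$: iteratively removing (i.e., absorbing) leaves of $b(S)$ reduces it to a single leaf whose weight is the total weight $k$. Hence $S$ avoiding $T$-copies forces $\abs{S}<k$, giving $f(n_1,n_2,\{T\})\le k-1=O_k(1)$.

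For the inductive step $d\ge 1$, I decompose $T$ at its root into a left subtree $T_L$ and a right subtree $T_R$ of depths $d_L,d_R\le d-1$. By the inductive hypothesis, $f(n_1,n_2,\{T_L\}),f(n_1,n_2,\{T_R\})=O_k((n_1+n_2)^{d-1})$. Let $S$ realize $f(n_1,n_2,\{T\})$ and split $S=S_{\Left}\sqcup S_{\Right}$. The key observation is that if $A\subseteq S_{\Left}$ is of type $T_L$ and $B\subseteq S_{\Right}$ is of type $T_R$, then $A\cup B\subseteq S$ is of type $T$: elements of $A$ and $B$ are separated at level $\ell(S)$ but agree at all higher levels, so $\ell(A\cup B)=\ell(S)$ and $b(A\cup B)$ has $b(A),b(B)$ as its left and right subtrees, which collapse to $T_L,T_R$ respectively. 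Consequently, either (A) $S_{\Left}$ avoids $T_L$-copies, or (B) $S_{\Right}$ avoids $T_R$-copies.

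Next, since $S_{\Left}$ and $S_{\Right}$ are both nonempty (as $\abs{S}\ge 2$), any $(n_1-1)$-subset of $S_{\Left}$ with increasing binary structure, joined with any element of $S_{\Right}$, would produce an $n_1$-subset of $S$ with increasing structure; hence $S_{\Left}$ avoids such subsets, and analogously $S_{\Left}$ avoids $n_2$-subsets with decreasing structure. Symmetric bounds apply to $S_{\Right}$. Furthermore, $S_{\Left},S_{\Right}\subseteq S$ both avoid $T$-copies. In case (A), this yields $\abs{S_{\Left}}\le f(n_1-1,n_2,\{T_L\})$ and $\abs{S_{\Right}}\le f(n_1,n_2-1,\{T\})$; case (B) is symmetric. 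Writing $F(n_1,n_2)=f(n_1,n_2,\{T\})$, we therefore obtain the recursion
\[
F(n_1,n_2)\le O_k\bigl((n_1+n_2-1)^{d-1}\bigr)+\max\{F(n_1-1,n_2),F(n_1,n_2-1)\}.
\]

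An inner induction on $n_1+n_2$ then closes the argument. Assuming $F(n_1',n_2')\le C_k(n_1'+n_2')^d$ for all $n_1'+n_2'<n_1+n_2$, the recursion combined with the convexity estimate $(n_1+n_2)^d-(n_1+n_2-1)^d\ge d(n_1+n_2-1)^{d-1}$ allows us to absorb the $O_k((n_1+n_2-1)^{d-1})$ term, giving $F(n_1,n_2)\le C_k(n_1+n_2)^d$ provided $C_k$ is chosen large enough depending only on $k$. The base cases of the inner induction ($n_1\le 2$ or $n_2\le 2$) are trivial since any pair of integers has both increasing and decreasing binary structure, forcing $F\le 1$. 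I expect the main subtlety to be verifying the structural claim about combining $T_L$ and $T_R$ copies across the top split, which hinges on the preservation of the top splitting level under forming such unions; once that is in hand, the remainder is a routine nested induction.
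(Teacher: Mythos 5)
Your proof is correct and follows essentially the same route as the paper: induct on depth, split $S$ at the top splitting level, observe that a $T_L$-copy in $S_{\Left}$ and a $T_R$-copy in $S_{\Right}$ would merge into a $T$-copy, and derive the same recursion in $n_1+n_2$. The only (cosmetic) difference is how the recursion is solved: the paper sets $g(n,T)=\max_{n_1+n_2=n}f(n_1,n_2,\{T\})$ and sums the increments directly, while you run an inner induction with a convexity estimate to absorb the $O_k((n_1+n_2-1)^{d-1})$ term.
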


\begin{proof}
We induct on $d$.
If $d=0$, then any set of non-negative integers of size $\abs{T}$ is of type $T$, which shows that $f(n_1,n_2,T)\leq \abs{T}-1=O_{\abs{T}}((n_1+n_2)^{d})$.
Now suppose that the claim holds for all smaller depths.
Let $S$ be a set of non-negative integers that avoids $n_1$-sets with increasing binary structures, $n_2$-sets with decreasing binary structures, and $\abs{T}$-sets of type $T$.
Let $T_{\Left}$ and $T_{\Right}$ be the left and right subtrees of the root of $T$, respectively.
Suppose that $\abs{S}\geq 2$.
As in the previous proof, we know that $S_{\Left}$ does not contain any $(n_1-1)$-sets with increasing binary structures.
Similarly, $S_{\Right}$ does not contain any $(n_2-1)$-sets with decreasing binary structures.

Moreover, it cannot simultaneously hold that $S_{\Left}$ contains a set of type $T_{\Left}$, and that $S_{\Right}$ contains a set of type $T_{\Right}$, as these would combine to give a set of type $T$ in $S$. If 
$S_{\Left}$ does not contain a set of type $T_{\Left}$, then
\[\abs{S} = \abs{S_{\Left}}+\abs{S_{\Right}}\leq f(n_1-1,n_2,\{T_{\Left}\})+f(n_1,n_2-1,\{T\}).\]
Similarly, if $S_{\Right}$ does not contain a set of type $T_{\Right}$, then

\[\abs{S} = \abs{S_{\Left}}+\abs{S_{\Right}}\leq f(n_1-1,n_2,\{T\})+f(n_1,n_2-1,\{T_{\Right}\}).\]
Combining the two cases, we have
\begin{align*}
    f(n_1,n_2,\{T\})\leq \max&\left\{f(n_1-1,n_2,\{T_{\Left}\})+f(n_1,n_2-1,\{T\}),\right.\\&\left.f(n_1-1,n_2,\{T\})+f(n_1,n_2-1,\{T_{\Right}\})\right\}.
\end{align*}
For any $n\geq 2$, set $g(n,T)\eqdef \max_{n_1+n_2=n}f(n_1,n_2,\{T\})$.
Then the above inequality gives
\[g(n,T)\leq g(n-1,T)+\max\left\{g(n-1,T_{\Left}), g(n-1,T_{\Right})\right\},\]
and hence
\[g(n,T)\leq g(2,T)+\sum_{i=2}^{n-1}\max\left\{g(i,T_{\Left}), g(i,T_{\Right})\right\}.\]

Note that $g(2,T)=f(1,1,T)=0$ and 
$$\max\{g(i,T_{\Left}),g(i,T_{\Right})\}\le\max\{ g(n,T_{\Left}),g(n,T_{\Right})\}=O_{\abs T}(n^{d-1})$$ by the inductive hypothesis.
Therefore $g(n,T)=O_{\abs T} (n^{d})$ and so $f(n_1,n_2,\{T\})=O_{\abs T} ((n_1+n_2)^{d})$, which closes the induction.
\end{proof}

We briefly remark that from \cref{lemma:family-of-depth-1}, it is easy to see that the stepping-up construction for $k=4$ of Erd\H{o}s, Hajnal and Rado only grows the independence number at most linearly.
To bound the clique number, we can apply \cref{lemma:f-upper-bound} to see that the independence number of the complement only grows at most quadratically as $T_{1,(2,1)}$ has depth $2$.
To show that the clique number grows at most linearly, one would need to compute $f(n_1,n_2,\{T_{1,(2,1)},T_{(1,2),1}\})$, and it is not hard to see that once we take $T_{(1,2),1}$ into consideration, this function does indeed grow linearly in $n_1$ and $n_2$.
We remark too that one can similarly rephrase stepping-up for general uniformities using our binary structures framework,
but we omit the details as this does not improve on the classical construction.

\section{Linear hypergraphs}\label{sec:linear}
We now turn to the proof of our technical result, \cref{prop:step up linear}. The main remaining difficulty is the construction of the linear $k$-graph; the following result gives the main property we need of this construction, namely that it is avoided by the stepping-up constructions discussed in the previous section. 

\begin{theorem}\label{thm:linear-stepping-up}
    Let $k\geq 4$, and let $H$ be a linear $(k-1)$-graph with no isolated vertices.
    Then there exists a linear $k$-graph $H'$ such that the following holds.
    Suppose that $G$ is a $(k-1)$-graph that is $H$-free, and $\cT$ is a family of types of binary structure of size $k$ that are not monotone.
    Then the stepping up $\Tilde{G}$ of $(G,G,\cT)$ is $H'$-free. 
\end{theorem}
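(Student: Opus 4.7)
The plan is to construct $H'$ explicitly from $H$ and then show, by a direct structural argument, that any embedding $\varphi \colon H' \hookrightarrow \tilde G$ would yield an embedding of $H$ into $G$, contradicting the $H$-freeness hypothesis.

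For the construction, the basic idea is the following. For each edge $f$ of $H$, introduce a private new vertex $u_f$ and include the $k$-set $e_f := f \cup \{u_f\}$ as an edge of $H'$. Setting $V(H') := V(H) \cup \{u_f : f \in E(H)\}$, linearity of $H'$ follows at once from linearity of $H$ together with the privateness of the $u_f$, since any two edges $e_f$ and $e_g$ intersect in $f \cap g$, which has size at most one. If this basic extension turns out not to be rigid enough to force monotonicity of the embedded edges, I would augment it with auxiliary edges built from further private vertices, chosen so as to preserve linearity while pinning down the role of each $u_f$ inside the binary structure of $\varphi(e_f)$.

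Given a candidate embedding $\varphi$, I would write $x_v := \varphi(v)$ for $v \in V(H)$ and $y_f := \varphi(u_f)$. Each $\varphi(e_f)$ must lie in $\tilde G_1 \cup \tilde G_2 \cup G_\cT$, and the argument proceeds in three steps. First, rule out $G_\cT$: any edge of $G_\cT$ has a non-monotone binary structure, meaning its binary tree contains an internal node both of whose subtrees have size at least $2$. I would show this is incompatible with the rigidity built into $H'$, because the ``special'' vertex $u_f$ must occupy a specific position (e.g.\ the top or bottom element) in the binary structure, while non-monotone types do not permit such a canonical role. Second, among the remaining edges $\varphi(e_f) \in \tilde G_1 \cup \tilde G_2$, show that all edges share the same orientation: if $e_f, e_g$ share a vertex $v \in f \cap g$, then opposite orientations would force $x_v$ to simultaneously play incompatible ``relative position'' roles in the two binary trees, a contradiction. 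Third, once all $\varphi(e_f)$ lie in (say) $\tilde G_1$, each $L(\varphi(e_f))$ is an edge of $G$; define $\psi \colon V(H) \to \{0, \ldots, N-1\}$ by matching each $v \in V(H)$ to the level $\ell \in L(\varphi(e_f))$ corresponding to the rank of $x_v$ inside $\varphi(e_f)$, and verify consistency across edges sharing $v$. Linearity of $H$ keeps consistency checks local, so $\psi$ is a well-defined injection embedding $H$ into $G$.

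The main obstacle is the first step: ruling out $G_\cT$-edges from the image of $\varphi$. A non-monotone type in $\cT$ allows $\varphi(e_f)$ to have a binary structure in which $y_f$ need not play any distinguished role, so without enough rigidity in $H'$ one cannot force the embedding to avoid such structures. The technical heart of the proof therefore lies in augmenting the basic $H'$ with carefully chosen auxiliary edges (equivalently, adding more vertex-sharing constraints across the $u_f$ and the original vertices of $H$) to exclude non-monotone embeddings uniformly over all permissible $\cT$. Doing this while preserving linearity is the delicate balance, and the hypothesis that $H$ has no isolated vertices enters here to give enough ``contact'' between edges of $H'$ to propagate the orientation constraints across the whole embedding.
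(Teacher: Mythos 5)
There is a genuine gap, and it is exactly at the point you flag as ``the technical heart'': the construction of $H'$ is never given. Your $H'$ is essentially the expansion $H^+$ (one private vertex per edge) plus unspecified ``auxiliary edges'', and the step that rules out $G_\cT$-edges is deferred to this unspecified augmentation. This is the whole difficulty, and the local-rigidity strategy is unlikely to succeed: an embedding can realize essentially any binary-structure type on each image edge, the family $\cT$ is an arbitrary family of non-monotone types, and because $H'$ must be linear, any gadget edge meets $e_f$ in at most one vertex, so it imposes almost no constraint on the type of $\varphi(e_f)$. There is also a secondary gap in your third step: even after all edges are known to be (say) increasing, the level you assign to $x_v$ is the splitting level of $x_v$ inside $\varphi(e_f)$, which depends on the edge $f$; for two edges $f,g$ of $H$ sharing $v$ nothing forces these two levels to agree, so the ``consistency checks'' are not local or automatic, and linearity by itself does not make them so.

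The paper takes a genuinely different, global route which you may find instructive to compare with. Instead of trying to force every edge of the copy of $H'$ to be monotone, it constructs $H'$ (probabilistically, via a random linear \emph{oriented} $s$-graph with $s=|V(H^+)|$ into whose edges ordered copies of $H^+$ are planted) so as to have a partition-Ramsey property: for \emph{every} dyadic partition of $V(H')$, every $\{L,R\}$-coloring of the parts, and every vertex ordering, some copy of the ordered expansion $H^+$ is transversal, monochromatic, and correctly ordered (\cref{lem:one shot construction}). Given a hypothetical copy of $H'$ in $\Tilde{G}$, one extracts a dyadic partition by iteratively splitting the image into left/right subsets (always peeling off the smaller side), colors each part by which side it was, and orders vertices by part index. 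The guaranteed monochromatic, correctly ordered transversal copy of $H^+$ then has all of its expanded edges $\{v_e\}\cup\varphi(e)$ automatically of monotone binary structure (increasing or decreasing according to the color), hence not of any type in $\cT$, hence coming from the left or right stepping-up of $G$; and because the level assigned to a vertex is the top splitting level of the globally defined block containing it, the map to $V(G)$ is well defined and injective, yielding a copy of $H$ in $G$ and the desired contradiction. The key conceptual difference is that the paper never prevents $G_\cT$-edges from appearing in the copy of $H'$ --- it only needs one well-structured sub-copy of $H^+$, and arranging that robustly over all partitions, colorings and orderings is what forces $H'$ to be a large Ramsey-type object rather than a locally augmented expansion.
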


We remark that we need $k\geq 4$ because any type of binary structure of size at most $3$ is monotone. Given \cref{thm:linear-stepping-up}, we can readily complete the proof of \cref{prop:step up linear} and hence of \cref{thm:main}.
\begin{proof}[Proof of \cref{prop:step up linear}]
    For any linear $(k-1)$-graph $H$, let $H'$ be the linear $k$-graph obtained from \cref{thm:linear-stepping-up}.
    By definition, there exists a $(k-1)$-graph $G$ on $\left\{0,1,\ldots, r(H,K_n^{(k-1)})-2\right\}$ that is $H$-free with $\alpha(G)\leq n-1$.
    Let $\cT$ be $\{T_{2,k-2},T_{k-2,2}\}$.
    As $k\geq 4$, we know that both children have weight at least $k-2\geq 2$ and so $T_{2,k-2}$ and $T_{k-2,2}$ are not monotone.
    
    Let $\Tilde{G}$ be the stepping up of $(G,G,\cT)$.
    Then we know that $\Tilde{G}$ is $H'$-free and 
    \[\alpha(\Tilde{G})\leq 2\alpha(G)+2k<2n+2k\]
    by \cref{lemma:independence-upper-bound,lemma:family-of-depth-1}.
    This shows that
    \[r(H', K_{2n+2k}^{(k)})> \lvert{V(\Tilde{G})}\rvert = 2^{r(H,K_n^{(k-1)})-1},\]
    as desired.
\end{proof}
We now turn to the proof of \cref{thm:linear-stepping-up}, which occupies the remainder of this section.
We first make the following two definitions, which arise naturally when analyzing stepping-up constructions.
\begin{definition}
    A \textit{dyadic partition} of a set $A$ is a partition $A=A_1\sqcup A_2 \sqcup \cdots \sqcup A_t$ with the property that $|A_i| \le \frac{1}{2} (|A_i| + \cdots + |A_t|)$ for all $i < t$ and $|A_t| = 1$.
\end{definition}

The following lemma captures some properties we need of our construction of a linear $k$-graph.

\begin{lemma}\label{lem:one shot construction}
    Let $k \geq 3$, and fix an ordered linear $k$-graph $F^<$. There exists a linear $k$-graph $H'$ with the following property. For every dyadic partition $V(H')=A_1 \sqcup \dots \sqcup A_t$, and for every two-coloring $\beta:[t] \to \{L,R\}$, and for every ordering $\pi$ of $V(H')$, there exists an ordered monochromatic transversal copy of $F^<$. That is, there is a copy of $F^<$ each of whose vertices lie in a different part $A_i$, such that all these $A_i$ receive the same color under $\beta$, and such that the order of these vertices under $\pi$ agrees with the fixed ordering of $F^<$.
\end{lemma}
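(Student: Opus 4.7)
My plan is to construct $H'$ as an iterated linear amplification of $F^<$, and then to extract the required monochromatic ordered transversal copy by an ordered-Ramsey argument combined with a transversality-enforcing counting step that exploits the dyadic structure.

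For the construction, I would build a chain of ordered linear $k$-graphs $F_0 = F^<, F_1, \ldots, F_d = H'$, where each $F_{j+1}$ is obtained from $F_j$ by replacing every vertex with a blob of $M$ new vertices and including only a carefully chosen subfamily of the blown-up $k$-tuples as edges, designed to preserve linearity. For $k = 3$, an affine constraint on blob labels (for example $j_1 + j_2 + j_3 \equiv 0 \pmod{M}$ in $\FF_M$) suffices, since two blown-up edges sharing two labels from a common edge of $F_j$ forces the third to agree. For $k \geq 4$, no single affine constraint prevents two blown-up edges from sharing $k-1$ vertices, so one must replace it with a partial Steiner $(k-1)$-system or analogous sparse algebraic structure on the label tuples. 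The iteration depth $d$ and blob size $M$ are chosen large enough that $H'$ satisfies a strong ordered Ramsey property: every 2-coloring of $V(H')$, paired with every linear ordering of $V(H')$, admits many ordered monochromatic copies of $F^<$.

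Given a dyadic partition $V(H') = A_1 \sqcup \cdots \sqcup A_t$, a 2-coloring $\beta \colon [t] \to \{L,R\}$, and an ordering $\pi$, the dyadic inequality $|A_i| \leq (|A_i| + \cdots + |A_t|)/2$ implies that every part has size at most $|V(H')|/2$. Pass to the induced vertex 2-coloring $v \in A_i \mapsto \beta(i)$; by the ordered Ramsey property of $H'$, many ordered monochromatic copies of $F^<$ exist whose vertex order under $\pi$ matches that of $F^<$. To upgrade such a copy to a transversal one, I would count monochromatic ordered copies with two vertices in a common part: this count is bounded above by $\sum_i \binom{|A_i|}{2}$ times the number of extensions of a pair to an ordered copy of $F^<$, and the latter is polynomial in $|V(H')|$ by linearity of $H'$. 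Since each $|A_i| \leq |V(H')|/2$ and the total number of monochromatic ordered copies is large enough by sufficient amplification, the non-transversal copies form a minority, and a transversal one must exist.

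The principal obstacle is the construction of the linear amplification itself. For $k \geq 4$, preserving linearity under iterated blow-ups is genuinely delicate because a single affine constraint on the labels no longer suffices, and one must find the right partial Steiner system (or analogous algebraic structure) on the label tuples and verify that it remains compatible with the edge set of $F^<$ and with iteration. Secondary difficulties include guaranteeing that the ordered Ramsey property holds uniformly over all orderings $\pi$---which likely requires $d$ to be chosen Ramsey-theoretically large rather than merely ``big enough''---and quantitatively controlling the transversality count, which drives the choice of the blob sizes. Carefully tracking linearity and the Ramsey counts simultaneously through the iteration is the central technical work.
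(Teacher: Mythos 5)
There is a genuine gap, and it sits precisely where the dyadic structure has to be exploited. Your transversality-upgrading count cannot work as stated: a dyadic partition may put half of $V(H')$ into $A_1$, and more adversarially the parts $A_1,\dots,A_{s-1}$ (where $s=|V(F^<)|$) may together contain a $1-2^{-(s-1)}$ fraction of all vertices. In that regime most copies of $F^<$ --- monochromatic or not --- have two vertices inside a single part, so the non-transversal copies are typically the vast majority rather than a minority; your upper bound $\sum_i\binom{|A_i|}{2}$ times the number of extensions of a pair is of at least the same order as the total number of copies, so the comparison yields nothing. Worse, since $\beta$ colors \emph{parts}, the adversary can give one color to only $s-1$ huge parts, so every monochromatic transversal copy is forced into the other color and hence into a possibly tiny (constant fraction $2^{-O(s)}$) subset of the vertices; a statement of the form ``there are many monochromatic ordered copies overall'' cannot detect this. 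What is needed is a lower bound on the number of \emph{transversal} monochromatic potential copies that uses the dyadic condition itself: this is exactly the role of \cref{lemma:monochromatic-greedy} and \cref{lemma:dyadic lower bound} in the paper, which show that some color $X$ remains spread over the parts robustly (after deleting any $s-1$ parts, $\Omega_s(n)$ vertices in color-$X$ parts remain), giving $\Omega_s(n^s)$ transversal monochromatic potential edges for every partition, coloring and ordering. Nothing in your proposal plays this role.

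The second gap is the construction itself, which you acknowledge is unresolved for $k\ge 4$ (the iterated blow-up with affine/partial-Steiner constraints preserving linearity). The paper sidesteps this entirely: it first builds, probabilistically, a linear \emph{oriented} $s$-graph $H_0$ with $s=|V(F^<)|$, taking edge probability $p=cn^{2-s}$ and independent uniformly random orientations, and shows that for each of the at most $(2n)^{2n}$ triples (dyadic partition, $\beta$, $\pi$) the probability of having no correctly ordered monochromatic transversal \emph{edge} is $e^{-\Omega_s(cn^2)}$, while linearity --- being an intersection of down-events --- holds with probability at least $e^{-O_s(c^2n^2)}$ by Harris's inequality, so both hold simultaneously for small $c$. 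Then $H'$ is obtained by planting an order-respecting copy of $F^<$ inside each oriented edge of $H_0$; linearity of $H'$ is immediate from linearity of $F^<$ and $H_0$, and the guaranteed transversal edge of $H_0$ directly yields the required ordered monochromatic transversal copy of $F^<$. In particular, the uniformity over all orderings $\pi$, which you only gesture at, is handled painlessly because one merely union-bounds over orderings against the independent random orientations of single edges.
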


We remark that \cref{lem:one shot construction} works for $k=3$.
However, our stepping-up construction only works for $k\geq 4$, as in the statement of \cref{thm:linear-stepping-up}. We will apply \cref{lem:one shot construction} with $F^<$ chosen to be a certain $k$-graph derived from a given $(k-1)$-graph, which we now define.

\begin{definition}
    Let $H$ be a $(k-1)$-graph. Its \emph{expansion} $H^+$ is the $k$-graph obtained from $H$ by adding to each $e \in E(H)$ a new vertex $v_e$, which participates in no other edges.

    An \emph{ordered expansion} of $H$ is any ordering of $V(H^+)$ in which, for all $e \in E(H)$, the vertex $v_e$ is the smallest vertex of $e\cup \{v_e\}$.
\end{definition}
For example, we may obtain an ordered expansion by putting all the new vertices $v_e$ first, followed by all the old vertices of $H$. 
Using \cref{lem:one shot construction} and the properties of the stepping-up construction discussed above, we are now ready to prove \cref{thm:linear-stepping-up}. 

\begin{proof}[Proof of \cref{thm:linear-stepping-up}]
    Let $F^<$ be an ordered expansion of $H$, and let $H'$ be the $k$-graph given by \cref{lem:one shot construction}. We will show that $H'$ satisfies the requirement of \cref{thm:linear-stepping-up}.
    Suppose for the sake of contradiction that $\Tilde{G}$ contains a copy of $H'$.
    In this case, we will think of $V(H')$ as a subset of $V(\Tilde{G})$ and produce a dyadic partition $A_1\sqcup\cdots\sqcup A_t$ with a coloring $\beta:[t]\to\{L,R\}$ of $V(H')$ as follows.
    Starting with $B_0 = V(H')$, suppose that we have defined $B_i$ and $A_1,\ldots, A_i$ so that $V(H') = A_1\sqcup\cdots\sqcup A_i\sqcup B_i$.
    If $B_i$ is a singleton, then we set $t=i+1, A_{i+1}=B_i$ and then terminate.
    Therefore we will now assume that $\abs{B_i}\geq 2$, and so its left and right subsets are well-defined and non-empty.
    Let $(A_{i+1},B_{i+1})$ be a permutation of $((B_i)_{\Left}, (B_i)_{\Right})$ so that $\abs{A_{i+1}}\leq \abs{B_{i+1}}$.
    Moreover, we set $\beta(i+1)=L$ if $A_{i+1}$ is the left subset of $B_i$, and $\beta(i+1)=R$ otherwise.
    Finally, we increase $i$ by $1$ and continue the process until it terminates.
    To check that $A_1\sqcup\cdots\sqcup A_t$ is indeed a dyadic partition of $V(H')$, note that the procedure guarantees that it is a partition with $\abs{A_t}=1$.
    In addition, for all $1\leq i<t$, we have that $\abs{A_i}\leq \frac{1}{2}\left(\abs{A_i}+\abs{B_i}\right) = \frac{1}{2}\left(\abs{A_i}+\cdots+\abs{A_t}\right)$.

    Having constructed the dyadic partition and the coloring, we now construct the ordering $\pi$ of $V(H')$.
    For any $v\in V(H')$, let $i(v)$ be the unique $i$ such that $v\in A_i$.
    Then for any $u,v\in V(H')$, we set $u>_{\pi} v$ if $i(u)<i(v)$.
    If $i(u)=i(v)$, then we arbitrarily order $u$ and $v$ with respect to $\pi$.

    By \cref{lem:one shot construction}, there is an ordered monochromatic transversal copy of $F^<=H^+$.
    We will use this copy to find a copy of $H$ in $G$, which gives a contradiction.
    First assume that the ordered monochromatic transversal copy of $H^+$ is monochromatic of color $R$.
    We will again think of $V(H^+)$ as a subset of $V(H')$, which is a subset of $V(\Tilde{G})$.
    By the definition of $H^+$, there is an injection $\varphi: V(H)\to V(H^+)$ such that for each edge $e\in E(H)$, we have that $\varphi(e)$ is contained in an edge $e'$ in $V(H^+)$ and $\varphi(e)$ does not contain the smallest element $v_e$ (with respect to $\pi$) in $e'$.
    Now we define $\varphi^*:V(H)\to V(G)=\{0,1,\ldots, N-1\}$ as $\varphi^*(v) = \ell\left(B_{i(\varphi(v))-1}\right)$, the top splitting level of $B_{i(\varphi(v))-1}$.
    We will show that this embeds $H$ into $G$. 

    For each $e\in H$, we can order the vertices in $e$ as $v_1,\ldots, v_{k-1}$ so that $i(\varphi(v_1))>\cdots > i(\varphi(v_{k-1}))$. (Recall that they are distinct as the copy of $H'$ is transversal).
    By the definition of $\pi$, we have that $v_e<_{\pi}\varphi(v_1)<_{\pi}\cdots <_{\pi} \varphi(v_{k-1})$, which shows that $i(v_e)>i(\varphi(v_1))$ again by the definition of $\pi$.
    Now for every $j\in[k-1]$, we will show that the top splitting level of $\{v_e,\varphi(v_1),\ldots, \varphi(v_j)\}$ is $\varphi^*(v_j)$ and its right subset is the singleton $\{\varphi(v_j)\}$.
    Note that by construction, it is clear that $B_{i(\varphi(v_j))-1}\supseteq \{v_e,\varphi(v_1),\ldots, \varphi(v_j)\}$.
    Moreover, as $\beta(i(\varphi(v_j)))=R$, the left subset of $B_{i(\varphi(v_j))-1}$ is $B_{i(\varphi(v_j))}$, which contains $v_e,\varphi(v_1)\ldots, \varphi(v_{j-1})$ as $i(v_e)>i(\varphi(v_1))>\cdots >i(\varphi(v_{j-1}))> i(\varphi(v_j))$; whereas its right subset is $A_{i(\varphi(v_j))}$, which contains $\varphi(v_j)$ by definition.
    Therefore the top splitting level of $\{v_e,\varphi(v_1),\ldots, \varphi(v_j)\}$ is $\ell(B_{i(\varphi(v_j))-1}) = \varphi^*(v_j)$, and its right subset is $\{\varphi(v_j)\}$.

    This immediately implies that $\{v_e\}\cup \varphi(e)$ has an increasing binary structure with $L(\{v_e\}\cup \varphi(e))$ is $\varphi^*(e)$.
    Note that for any $T\in \cT$, as $T$ is not monotone, we know that $\{v_e\}\cup \varphi(e)$ is not of type $T$.
    We also know that $\{v_e\}\cup\varphi(e)$ is not decreasing as $k\geq 3$.
    Therefore $\{v_e\}\cup\varphi(e)$ must be in the left stepping-up of $G$, and so $\varphi^*(e) = L(\{v_e\}\cup \varphi(e))\in E(G)$ by definition.
    As a consequence, $\varphi^*$ is an embedding from $H$ to $G$, which is a contradiction with the assumption that $G$ is $H$-free.
    Therefore we have reached a contradiction from assuming that there is a monochromatic ordered transversal copy of $H^+$ in $H$ with color $R$.

    The other case where the ordered transversal copy of $H^+$ is monochromatic of color $L$ can be dealt with analogously.
    We swap the roles of left and right, and redefine $\varphi^*(v)$ as $N-1-\ell(B_{i(\varphi(v))-1})$, and
    the rest of the argument holds verbatim.
    As we reach a contradiction either way, we see that $\Tilde{G}$ must be $H'$-free.
\end{proof}
\subsection{Properties of dyadic partitions}
All that remains is to prove \cref{lem:one shot construction}, which we now turn to.
In this subsection, we record two useful facts we will need about the sizes of various parts in a dyadic partition, and which we use in the next subsection to complete the proof..
The first demonstrates that it is not possible to put too many elements of $A$ into a small collection of the sets $A_i$.
\begin{lemma}\label{lemma:monochromatic-greedy}
Let $A=A_1\sqcup A_2 \sqcup \cdots \sqcup A_t$ be a dyadic partition. Then for every $I\subseteq [t]$, 
$$
|A|-\sum_{i\in I}|A_i|\ge \left\lfloor\frac{|A|}{2^{|I|}}\right\rfloor.
$$
\end{lemma}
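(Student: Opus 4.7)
The plan is to process the elements of $I$ one at a time in increasing order, tracking how much of $A$ remains after each removal. Write $I=\{j_1<j_2<\cdots<j_k\}$ and, for $0\le m\le k$, set
\[
    a_m \eqdef |A|-\sum_{l=1}^m |A_{j_l}|,
\]
so $a_0=|A|$ and $a_k$ is precisely the quantity to be bounded from below. The central claim to be established is that $a_m\ge\lfloor a_{m-1}/2\rfloor$ for every $m\in[k]$. Iterating this $k$ times, together with the elementary identity $\lfloor\lfloor n/2\rfloor/2\rfloor=\lfloor n/4\rfloor$ (applied inductively for non-negative integer $n$), then yields $a_k\ge\lfloor|A|/2^k\rfloor$, which is the desired inequality.

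For the generic case $j_m<t$, the dyadic condition gives
\[
    |A_{j_m}|\le \tfrac{1}{2}\bigl(|A_{j_m}|+|A_{j_m+1}|+\cdots+|A_t|\bigr).
\]
Because $j_1,\ldots,j_{m-1}<j_m$, the set $A_{j_m}\cup A_{j_m+1}\cup\cdots\cup A_t$ is disjoint from $A_{j_1}\cup\cdots\cup A_{j_{m-1}}$, so its size is at most $a_{m-1}$; hence $|A_{j_m}|\le a_{m-1}/2$. Integrality of $|A_{j_m}|$ then sharpens this to $|A_{j_m}|\le\lfloor a_{m-1}/2\rfloor$, so $a_m=a_{m-1}-|A_{j_m}|\ge \lceil a_{m-1}/2\rceil\ge\lfloor a_{m-1}/2\rfloor$, as required.

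The exceptional case $j_m=t$ must be handled separately, because the dyadic condition only stipulates $|A_t|=1$ rather than supplying an upper bound in terms of $a_{m-1}$. In this case, observe that $a_{m-1}\ge|A_t|=1$, so $\lceil a_{m-1}/2\rceil\ge 1$, whence $a_m=a_{m-1}-1\ge a_{m-1}-\lceil a_{m-1}/2\rceil=\lfloor a_{m-1}/2\rfloor$. No substantive obstacle is anticipated: the proof is simply a greedy accounting of the remaining mass, and the only minor subtlety is this special treatment of the terminal index $t$, which the asymmetric second clause in the definition of a dyadic partition is precisely set up to accommodate.
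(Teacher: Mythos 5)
Your proof is correct and follows essentially the same route as the paper's: process the indices of $I$ in increasing order, use the dyadic condition together with disjointness from the already-removed parts to show each removal takes at most half of what remains, and use $|A_t|=1$ for the terminal index. The only difference is bookkeeping --- you carry the floor through every step via the invariant $a_m\ge\lfloor a_{m-1}/2\rfloor$, whereas the paper maintains the exact bound $|A|/2^{j}$ while all chosen indices are below $t$ and only invokes the floor (via $x-1\ge\lfloor x/2\rfloor$) when $t\in I$; both versions are sound.
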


\begin{proof}
Let $I=\{i_1,i_2,\dots,i_s\}$ where $i_1<i_2<\dots<i_s$, and let $b_j=|A|-\sum_{k=1}^j|A_{i_k}|$. Suppose first that $i_s<t$. By the definition of a dyadic partition, we have $b_1\ge |A|/2$. For $j\ge 2$, suppose $b_{j-1}\ge |A|/2^{j-1}$. Then 
\begin{align*}
b_j&=|A|-\sum_{k=1}^{j}|A_{i_k}|=\frac{1}{2}\left(|A|-\sum_{k=1}^{j-1}|A_{i_k}|\right)+\frac{1}{2}\left(|A|-\sum_{k=1}^{j-1}|A_{i_k}|\right)-|A_{i_j}|\\
&\ge \frac{b_{j-1}}{2}+\frac{1}{2}\left(|A|-\sum_{k=1}^{j-1}|A_{i_k}|-\sum_{i=i_j}^t|A_i|\right)\ge \frac{|A|}{2^j},
\end{align*}
where
the first inequality uses the fact that $|A_{i_j}|\le \frac{1}{2}\sum_{i=i_j}^t|A_i|$, which follows from the definition of a dyadic partition and the fact that $i_j<t$, and the second inequality uses the inductive hypothesis and the fact that the term in parentheses is non-negative.

This completes the proof in case $i_s<t$. Additionally, the result is vacuous if $|A|<2^{|I|}$. So we may assume that $i_s=t$ and $|A|/2^{|I|-1}>2$. We then apply the argument above to $I \setminus \{t\}$ to conclude that
\[
|A|-\sum_{i\in I}|A_i| = |A|-\sum_{i \in I \setminus\{t\}} |A_i| - |A_t| \geq \frac{|A|}{2^{|I|-1}}-1 \geq \left\lfloor \frac{|A|}{2^{|I|}}\right\rfloor,
\]
where we use the fact that $|A_t|=1$ in a dyadic partition, as well as the inequality $x-1 \geq \lfloor x/2\rfloor$, valid for all real $x \geq 1$.
\end{proof}

Our next lemma, which will be crucial in the analysis of the stepping-up construction, is a simple corollary of \cref{lemma:monochromatic-greedy}.
\begin{lemma}\label{lemma:dyadic lower bound}
Let $A=A_1\sqcup A_2 \sqcup \cdots \sqcup A_t$ be a dyadic partition. For every $a\in A$, let $i(a)$ be the unique index $i$ such that $a\in A_i$. For every integer $s<t/2$ and every 2-coloring $\beta:[t]\rightarrow\{L,R\}$, there exists $X\in \{L, R\}$ such that for every $I\subseteq[t]$ with $|I|=s-1$, 
$$
|\{a\in A~\big|~i(a)\not\in I,\beta(i(a))=X\}|\ge \left\lfloor\frac{|A|}{2^{2s-1}}\right\rfloor.
$$
\end{lemma}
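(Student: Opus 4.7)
The plan is to characterize, for each color $X \in \{L,R\}$, the adversary's optimal choice of $I$ (the one minimizing $\abs{\{a : i(a) \notin I,\ \beta(i(a))=X\}}$ subject to $\abs{I}=s-1$), and to apply \cref{lemma:monochromatic-greedy} to a carefully chosen union of these adversary sets.

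First I would observe that
\[
    \abs{\{a : i(a) \notin I,\ \beta(i(a))=X\}} = \abs{A_X} - \sum_{i \in I,\ \beta(i)=X}\abs{A_i},
\]
where $A_X := \bigsqcup_{i:\beta(i)=X}A_i$, depends on $I$ only through its $X$-colored entries. Consequently, whenever the index class $\{i : \beta(i)=X\}$ has size at least $s-1$, the adversary's optimum is achieved by $I_X^*$, the set of the $s-1$ indices of color $X$ with the largest $\abs{A_i}$ (ties broken arbitrarily). Writing $f_X(s)$ for this minimum value, the goal reduces to $\max(f_L(s),f_R(s)) \geq \lfloor \abs{A}/2^{2s-1}\rfloor$.

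I would then split into two cases. In the main case, both index classes have size $\geq s-1$, so $I_L^*$ and $I_R^*$ are both defined and are automatically disjoint (they have different colors), with $\abs{I_L^* \cup I_R^*}=2s-2$. Applying \cref{lemma:monochromatic-greedy} to $I=I_L^* \cup I_R^*$ gives
\[
    f_L(s)+f_R(s) = \abs{A} - \sum_{i \in I_L^* \cup I_R^*}\abs{A_i} \;\geq\; \lfloor \abs{A}/2^{2s-2}\rfloor \;\geq\; 2\lfloor \abs{A}/2^{2s-1}\rfloor,
\]
where the last inequality uses $\lfloor 2y\rfloor \geq 2\lfloor y\rfloor$. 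Taking $X$ to be the color achieving the larger $f_X(s)$ concludes this case.

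In the edge case, one index class, say $R^*:=\{i:\beta(i)=R\}$, has size at most $s-2$. The hypothesis $s<t/2$ forces $t \geq 2s+1$, so $\abs{\{i:\beta(i)=L\}} \geq s+3$, and $I_L^*$ is defined. Here I would pad $I_L^*$ with all of $R^*$ to form $I':=I_L^* \cup R^*$, of size at most $2s-3$. Because this removes all of $A_R$, one has $\abs{A}-\sum_{i \in I'}\abs{A_i}=f_L(s)$, whence \cref{lemma:monochromatic-greedy} yields $f_L(s) \geq \lfloor \abs{A}/2^{2s-3}\rfloor \geq \lfloor \abs{A}/2^{2s-1}\rfloor$, so $X=L$ works. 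The main obstacle I anticipate is exactly this asymmetric edge case: the clean symmetric argument of the first case breaks down when one color class is too small to even contain $I_X^*$. The remedy is the padding trick, which keeps the relevant set of size at most $2s-3$, still small enough for \cref{lemma:monochromatic-greedy} to deliver the required lower bound.
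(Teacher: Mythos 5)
Your proof is correct and rests on the same two ingredients as the paper's: applying \cref{lemma:monochromatic-greedy} to the union of the two colors' index sets (of size at most $2s-2$) and then splitting $\left\lfloor |A|/2^{2s-2}\right\rfloor \ge 2\left\lfloor |A|/2^{2s-1}\right\rfloor$ between the two colors. The paper runs the identical computation contrapositively---assuming a bad $I_L$ and a bad $I_R$ and contradicting \cref{lemma:monochromatic-greedy} on $I_L\cup I_R$---which makes the optimal-adversary characterization, the small-color-class edge case, and the padding trick unnecessary.
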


\begin{proof}
Suppose for contradiction that for each $X\in\{L,R\}$ there exists $I_X$ with $|I_X|=s-1$ such that
$$
|\{a\in A~\big|~i(a)\not\in I_X,\beta(i(a))=X\}|< \left\lfloor\frac{|A|}{2^{2s-1}}\right\rfloor.
$$
Then we have
$$
|A|-\sum_{i\in I_L\cup I_R}|A_i|\le \sum_{X\in\{L,R\}}|\{a\in A~\big|~i(a)\not\in I_X,\beta(i(a))=X\}|<2 \left\lfloor\frac{|A|}{2^{2s-1}}\right\rfloor \leq \left\lfloor \frac{|A|}{2^{2s-2}}\right\rfloor.
$$
Note that $|I_L\cup I_R|\le 2s-2$, hence by \Cref{lemma:monochromatic-greedy},
$$
|A|-\sum_{i\in I_L\cup I_R}|A_i|\ge \frac{|A|}{2^{|I_L\cup I_R|}}\ge \left\lfloor\frac{|A|}{2^{2s-2}}\right\rfloor.
$$
This gives a contradiction, and therefore completes the proof.
\end{proof}

\subsection{Proof of Lemma \ref{lem:one shot construction}}
In this section we prove \cref{lem:one shot construction}, thus completing the proof of \cref{thm:main}.

Let us define an \emph{oriented $s$-graph} to be an $s$-graph $H_0$, as well as a bijection $\psi_e:e \to [s]$ for every edge $e \in E(H_0)$. Note that in the case $s=2$, this precisely recovers the definition of an oriented graph. We say an oriented $s$-graph is \emph{linear} if its underlying unoriented $s$-graph is linear.

In the proof of \cref{lem:one shot construction}, we will use the following statement, which is simply the same statement in case $F^<$ is an ordered $s$-uniform edge.
\begin{lemma}\label{lem:oriented s-uniform}
    Let $s \geq 3$. There exists a linear oriented $s$-graph $H_0$ with the following property. For every dyadic partition $V(H_0)=A_1 \sqcup \dots \sqcup A_t$, and for every two-coloring $\beta:[t] \to \{L,R\}$, and for every ordering $\pi$ of $V(H_0)$, there exists an ordered monochromatic transversal edge. That is, there exists an edge $e \in E(H_0)$ which goes between $s$ distinct parts $A_i$, all of which receive the same color under $\beta$, such that the ordering given by $\psi_e$ agrees with the ordering induced by $\pi$ on $e$.
\end{lemma}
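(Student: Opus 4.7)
The plan is to prove \cref{lem:oriented s-uniform} by the probabilistic method combined with a union bound over all triples (dyadic partition, $2$-coloring, ordering). Fix a large integer $N$, and let $H_1$ be the random oriented $s$-graph on vertex set $[N]$ where each $s$-subset $e \subseteq [N]$ is independently included as an edge with probability $p = c N^{-(s-2)}$ for a small absolute constant $c$, and, conditional on inclusion, is equipped with an independent uniformly random bijection $\psi_e : e \to [s]$. Let $H_0$ be obtained from $H_1$ by deleting one edge from each pair of edges sharing at least two vertices, so that $H_0$ is automatically a linear oriented $s$-graph.

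For each triple $(\mathcal{A}, \beta, \pi)$ with $\mathcal{A} = A_1 \sqcup \cdots \sqcup A_t$ a dyadic partition of $[N]$, $\beta : [t] \to \{L, R\}$, and $\pi$ a total order on $[N]$, I apply \cref{lemma:dyadic lower bound} (valid for large $N$ since $t \geq \lceil \log_2 N \rceil > 2s$) to obtain a majority color $X \in \{L, R\}$ such that every choice of $s-1$ parts omits at least $\lfloor N/2^{2s-1} \rfloor$ color-$X$ vertices. A greedy selection (picking $v_1, \ldots, v_s$ one at a time and invoking \cref{lemma:dyadic lower bound} on the parts already used) then yields at least $\lfloor N/2^{2s-1} \rfloor^s / s! = \Omega(N^s)$ distinct $s$-subsets $\{v_1 <_{\pi} \cdots <_{\pi} v_s\}$ with all $v_j$ colored $X$ and lying in $s$ distinct parts. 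Each such \emph{candidate} subset, equipped with the orientation induced by $\pi$ (namely $\psi_e(v_j) = j$), appears as an edge of $H_1$ with probability $p/s! = \Theta(N^{-(s-2)})$; these events are mutually independent across candidates, since each involves a different $s$-subset. Hence the expected number of ``good'' edges of $H_1$ (ordered monochromatic transversal edges with the correct orientation) is $\Omega(N^2)$, and the Chernoff bound gives failure probability at most $\exp(-\Omega(N^2))$ for this triple.

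The number of triples $(\mathcal{A}, \beta, \pi)$ is at most $2^{O(N \log N)}$, since there are $N!$ orderings, at most $2^N$ colorings, and at most $N! \cdot 2^{N-1}$ dyadic partitions (being a subclass of ordered set partitions of $[N]$). The union bound then yields a total failure probability of $2^{O(N \log N)} \cdot \exp(-\Omega(N^2)) = o(1)$, so for $N$ sufficiently large there exists a realization of $H_0$ satisfying the conclusion. The main obstacle is controlling the deletion step: after deletion the edges of $H_0$ are no longer independent and the Chernoff bound does not apply directly. I would resolve this by noting that each good candidate has conditional probability at most $O(c)$ of being destroyed by deletion (there are only $O(N^{s-2})$ other $s$-subsets sharing at least two vertices with it, each an edge of $H_1$ with probability $p$); taking $c$ small keeps the expected number of surviving good edges at $\Omega(N^2)$, and a standard second-moment or Janson-type argument should preserve the $\exp(-\Omega(N^2))$ concentration through the deletion, allowing the union bound to go through.
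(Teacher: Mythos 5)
Your setup (random $s$-graph with $p=cN^{2-s}$, uniformly random orientation, counting $\Omega(N^s)$ candidate transversal tuples via \cref{lemma:dyadic lower bound}, per-triple failure probability $e^{-\Omega(N^2)}$ from independence over distinct $s$-sets, and a union bound over $2^{O(N\log N)}$ triples) matches the paper exactly up to the point of enforcing linearity. The gap is precisely the step you flagged: the deletion argument. After deleting edges to make $H_0$ linear, the event ``candidate $e$ is a correctly oriented edge \emph{and} survives deletion'' is the intersection of an increasing event (presence of $e$) and a decreasing event (absence of any edge meeting $e$ in two or more vertices), so it is not monotone; Janson's inequality therefore does not apply to the family of survival events, and a second-moment (Chebyshev-type) bound only gives failure probability polynomial in $N$, hopelessly short of what the union bound over $2^{\Theta(N\log N)}$ triples requires. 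The needed per-triple bound $\PP(\text{no surviving good edge})\le e^{-\Omega(N^2)}$ is plausibly true, but it is not a routine consequence of the tools you name: the survival events are positively correlated in awkward ways, and related quantities (e.g.\ the number of conflicting pairs among present candidates) have upper tails of probability only $e^{-O(\sqrt{c}\,N\log N)}$, so one cannot simply say ``each good edge is destroyed with probability $O(c)$ roughly independently.'' Making the deletion route rigorous would require a genuinely new argument, not a standard one.

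The paper avoids this entirely with a different, cheaper trick: it never deletes. For each pair $u,v$ the event $\cE_{u,v}$ that $u,v$ lie in at most one edge is decreasing and has probability at least $1-4^s c^2$, so Harris's (FKG) inequality gives $\PP(H_0\text{ linear})\ge (1-4^sc^2)^{\binom N2}\ge e^{-O_s(c^2N^2)}$. Separately, the union bound gives $\PP(\cE)\ge 1-e^{-\Omega_s(cN^2)}$ for the ``every triple has an ordered monochromatic transversal edge'' event. Since for $c$ small the exponent $c^2N^2$ is much smaller than $cN^2$, the two probabilities sum to more than $1$, so some outcome is simultaneously linear and has the desired property --- no conditioning, deletion, or correlation analysis between the two events is needed. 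If you want to salvage your write-up, replacing the deletion step with this Harris-plus-``$\PP(A)+\PP(B)>1$'' argument is the missing idea; as written, the proposal has a genuine gap at its final step.
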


\begin{proof}
    Let $n$ be a sufficiently large integer with respect to $s$, and let $c>0$ be a sufficiently small constant. Let $p = cn^{2-s}$. Let $H_0 \sim G_s(n,p)$ be a random $s$-graph on $k$ vertices in which each $s$-tuple is made an edge with probability $p$, independently over all choices. Additionally, we make $H_0$ into an oriented $s$-graph in a uniformly random way: each edge $e \in E(H_0)$ selects a uniformly random bijection $\psi_e:e \to [s]$, independently of all other choices. Let $\cE$ be the event that there is an ordered monochromatic transversal edge in $H_0$ for every dyadic partition of $V(H_0)$, every two-coloring of $[t]$, and for every ordering of $V(H_0)$. We begin by estimating the probability of $\cE$, and then show that with positive probability we can ensure both that $\cE$ holds and that $H_0$ is linear.

    Let us fix a dyadic partition $V(H_0)=A_1 \sqcup \dots \sqcup A_t$, a coloring $\beta:[t] \to \{L,R\}$, and an ordering $\pi$ of $V(H_0)$. Note that there are at most $n^n$ dyadic partitions of a set of order $n$ (because every such partition yields a unique function $[n] \to [t]$, and $t \leq n$), there are $2^t \leq 2^n$ choices for $\beta$, and there are $n! < n^n$ orderings of $V(H_0)$. So in total we need to union-bound over at most $(2n)^{2n}$ choices for these three data.

    For any $a\in A_1\sqcup\cdots \sqcup A_t$, let $i(a)$ be the unique $i$ such that $a\in A_i$.
    We begin by estimating the number of potential monochromatic transversal edges. This number is exactly
    \[T \eqdef \frac{1}{s!}\#\{a_1,\ldots, a_s\in V(H_0): i(a_1),\ldots, i(a_s)\textup{ distinct, and } \beta(i(a_1))=\cdots=\beta(i(a_s))\}.\]
    We first claim that $T = \Omega_s(n^s)$, which we prove as follows.
    Let $X\in \{L,R\}$ be such that the conclusion of \cref{lemma:dyadic lower bound} holds\footnote{Recall that $n$ is sufficiently large, hence $t \geq \log n$ is also sufficiently large, so we may assume $t>2s$ in order to apply \cref{lemma:dyadic lower bound}}.
    Then for any $i_1,\ldots, i_{s-1}\in [t]$, there exist at least $\Omega_s(n)$ choices for $a_s$ with $i(a_s)\notin \{i_1,\ldots, i_{s-1}\}$ and $\beta(i(a_s))=X$.
    This  shows that for any $a_1,\ldots, a_m$ with $i(a_1),\ldots, i(a_m)$ distinct, $\beta(i(a_1))=\cdots=\beta(i(a_m))=X$ and $m<s$, there are at least $\Omega_s(n)$ ways to extend this sequence by $a_{m+1}$ while maintaining the properties.
    This shows $T=\Omega_s(n^s)$, as claimed.
    
    Now, let $Z$ be the random variable counting the number of monochromatic transversal edges in $H_0$ which are correctly ordered, i.e.\ whose ordering $\psi_e$ agrees with $\pi$. 
    Recall that the random ordering is chosen independently of the other random choices, showing that $Z$ is distributed as a binomial random variable with parameters $T$ and $p/s!$. Therefore,
    \[
    \PP(Z=0) = (1-p/s!)^T \leq e^{-pT/s!} \leq e^{-\Omega_s(pn^s)} = e^{-\Omega_s(cn^2)}.
    \]
    Applying the union bound over the at most $(2n)^{2n}$ choices for the dyadic partition as well as $\beta$ and $\pi$, we see that with probability at least $1-e^{-\Omega_s(cn^2)}$, we have the claimed property for all possible choices of these three data. In other words, $\PP(\cE) \geq 1-e^{-\Omega_s(cn^2)}$.

    It remains to ensure that $H_0$ is linear. For a pair of vertices $u,v$, let $\cE_{u,v}$ be the event that $u$ and $v$ are contained in at most one edge of $H_0$. By the union bound, $\PP(\overline{\cE_{u,v}}) \leq 4^s n^{2s-4} p^2=c^2 4^s$, as we have at most $n^{2s-4}$ ways of picking the remaining vertices, at most $4^s$ ways of partitioning them into two edges, and probability $p^2$ that both edges appear. Now, we observe that each $\cE_{u,v}$ is a down-event, so by Harris's inequality (or the FKG inequality), we have
    \[
    \PP \left( \bigwedge_{\{u,v\} \in \binom{V(G)}2} \cE_{u,v}\right) \geq \prod_{u,v} (1-c^2 4^s) = (1-c^2 4^s)^{\binom n2} \geq e^{-O_s(c^2 n^2)}.
    \]
    Recall that $\PP(\cE)\geq 1-e^{-\Omega_s(cn^2)}$. Therefore, for $c$ sufficiently small in terms of $s$, and for $n$ sufficiently large, we have that the sum of these two probabilities is larger than $1$. Hence, with positive probability, $H_0$ is linear and contains an ordered monochromatic transversal edge for every dyadic partition, every coloring, and every ordering.
\end{proof}
\begin{remark}
    The same proof shows that we can actually take $H_0$ to have (Berge) girth $>g$, for any fixed $g$. The linear case is simply this statement for $g=2$. Indeed, another application of Harris's inequality shows that the probability that $H_0$ does not contain a Berge cycle of length at most $g$ is also at least $e^{-O_{s,g}(c^2 n^2)}$.
\end{remark}
We are finally ready to prove \cref{lem:one shot construction}.
\begin{proof}[Proof of \cref{lem:one shot construction}]
    Let $s=\abs{F^<}$. Let $H_0$ be the oriented $s$-graph given by \cref{lem:oriented s-uniform}. Obtain a $k$-graph $H'$ by placing a copy of $F^<$ in each $s$-uniform edge of $H_0$, in an order-respecting way. That is, given an edge $e \in E(H_0)$, we insert $F^<$ into $e$ by mapping the $i$th vertex in the ordering of $F^<$ to $\psi_e^{-1}(i)$, for all $1 \leq i \leq s$. Note that since $F^<$ and $H_0$ are both linear, so is $H'$. Additionally, by \cref{lem:oriented s-uniform}, we know that in any dyadic partition, coloring, and ordering of $H_0$, there is a monochromatic correctly-ordered transversal edge. This precisely yields the claimed monochromatic correctly-ordered transversal copy of $F^<$ in $G$.
\end{proof}
\begin{remark}
    As above, if we assume that $F^<$ has girth $>g$, then we can ensure that $G$ also has girth more than $g$, since any short Berge cycle in $G$ must either stay entirely within one edge of $H_0$, or traverse at least $g$ edges in $H_0$, and both of these are ruled out by the assumption that $F$ and $H_0$ have girth $>g$.
\end{remark}

\section{Concluding remarks}
One of the most fascinating special cases of a linear hypergraph is the Fano plane $F_3$; this is the seven-vertex $3$-graph whose hyperedges are the lines in the projective geometry $\PG(2,2)$ over $\FF_2$. The growth rate of $r(F_3,K_n\up 3)$ remains wide open: the best known lower bound is polynomial and the best known upper bound is exponential, and it would be very interesting to narrow the gap. In particular, $F_3$ is not iterated tripartite, so the conjecture of \cite{2411.13812} predicts that $r(F_3,K_n\up 3)$ grows super-polynomially; proving or disproving this is a natural step towards resolving the conjecture.

There is a natural $4$-uniform analogue of the Fano plane, namely the 13-vertex linear $4$-graph $F_4$ whose hyperedges are the lines in the projective geometry $\PG(2,3)$ over $\FF_3$. $F_4$ is again not iterated $4$-partite; curiously, however, it is quite straightforward to show that $r(F_4,K_n\up 4)$ grows super-polynomially.
\begin{proposition}\label{prop:F4}
    We have $r(F_4,K_n\up 4) > 2^{n-2}$. 
\end{proposition}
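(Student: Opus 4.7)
My plan is to construct, by induction on $n$, a 4-graph $G_n$ on a vertex set $V_n$ of size $2^{n-2}$ that is $F_4$-free and has $\alpha(G_n) \leq n-1$; this furnishes a 2-coloring of $K_{2^{n-2}}\up 4$ without any red $F_4$ or blue $K_n\up 4$. The base case $n=4$ is trivial: take $G_4 = K_4\up 4$, a single hyperedge on four vertices, which is $F_4$-free (far fewer than the $|V(F_4)|=13$ vertices needed) and has $\alpha(G_4) = 3$.

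For the inductive step, I would double. Given $G_n$ on $V_n$, set $V_{n+1} := V_n \sqcup V_n'$ where $V_n'$ is a disjoint copy, and let $\pi\colon V_{n+1}\to V_n$ be the natural projection identifying $v$ with $v'$. The edges of $G_{n+1}$ are of two types: \emph{blow-up} edges---for each $e=\{a,b,c,d\}\in E(G_n)$ and each $T\subseteq e$, include the 4-set $(e\setminus T)\cup\{t': t\in T\}$; and \emph{link} edges---for each pair $\{u,v\}\subseteq V_n$, include $\{u,u',v,v'\}$. The independence bound $\alpha(G_{n+1}) \leq n$ follows quickly: if $S^*\subseteq V_{n+1}$ is independent, then $\pi(S^*)$ contains no edge of $G_n$ (else a blow-up edge would lie in $S^*$), so $|\pi(S^*)|\leq n-1$; moreover, the link edges permit at most one $v\in V_n$ to contribute both copies to $S^*$, giving $|S^*|\leq n$.

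The crux is verifying that $G_{n+1}$ is $F_4$-free. Suppose for contradiction that $F_4\subseteq G_{n+1}$ on vertex set $U^*$, and let $d$ denote the number of $v\in V_n$ with both copies $v,v'\in U^*$. Since $F_4 = \PG(2,3)$ is a Steiner 2-design and blow-up edges have four distinct underlying vertices, the unique $F_4$-edge through each pair $\{v,v'\}\subseteq U^*$ must be a link edge; thus doubled vertices pair up via link edges. Two distinct link edges overlap in either $0$ or $2$ vertices of $V_{n+1}$ (depending on whether their defining pairs share a vertex in $V_n$), neither of which is consistent with the fact that distinct lines of $\PG(2,3)$ meet in exactly one point. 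Hence there is at most one link edge, and $d\in\{0,2\}$. The case $d=0$ is immediate: $\pi$ sends $U^*$ bijectively onto a 13-vertex set in $V_n$ and carries $F_4$ faithfully into $G_n$, contradicting the inductive hypothesis.

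The main obstacle is the case $d=2$, in which the embedded $F_4$ consists of one link edge and twelve blow-up edges whose projections form a specific 11-vertex, 12-edge sub-4-graph $H$ of $G_n$ (with two distinguished degree-$6$ vertices and nine degree-$4$ vertices). Ruling out $H$ does not follow from $F_4$-freeness of $G_n$ alone, so one must either strengthen the inductive hypothesis to also forbid the family of such $H$'s (and verify the doubling preserves the stronger property), or modify the link family so that $d=2$ is structurally impossible---for instance, by also including 4-sets of the form $\{u,u',v,w\}$ so that any putative $F_4$-embedding with doubled vertices would force an incompatible pair structure. This is where I expect the bulk of the technical work to lie.
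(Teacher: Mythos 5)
Your construction and most of your analysis are fine (the vertex count, the independence bound via the projection $\pi$ plus the link edges, and the case $d=0$ all check out), but the proof is genuinely incomplete at the point you yourself flag: the case $d=2$. There, the embedded $F_4$ consists of one link edge and twelve blow-up edges, and projecting gives the quotient structure obtained from $\PG(2,3)$ by identifying the two pairs of points on one line -- an $11$-vertex, $12$-edge $4$-graph that is not even linear and whose exclusion from $G_n$ does not follow from $F_4$-freeness. Your two suggested remedies (strengthening the induction to forbid this quotient, or enlarging the link family) are not carried out, and neither is routine: forbidding the quotient would force you to control further quotients at the next doubling, and adding edges of the form $\{u,u',v,w\}$ changes the independence analysis. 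So as written there is no proof that $G_{n+1}$ is $F_4$-free, which is the heart of the proposition.

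The missing idea, and the way the paper closes exactly this hole, is to choose the doubling so that \emph{every} edge meets each half in an even number of vertices. The paper's $G_k$ is the disjoint union of two copies of $G_{k-1}$ together with \emph{all} $4$-sets meeting each copy in exactly two vertices (no $1{+}3$ edges, unlike your blow-up edges). Then any copy of $F_4$ that crosses the partition induces a nonempty proper subset $A\subseteq V(F_4)$ such that every line of $\PG(2,3)$ meets $A$ in $0$, $2$, or $4$ points, and a short check (done by computer in the paper) shows no such $A$ exists; if the copy does not cross, one inducts into a single half. Including all $2{+}2$ cross-sets also makes the independence number grow by exactly one per doubling, since an independent set cannot take two vertices from each half, yielding $\alpha(G_k)=k+2$ on $2^{k+1}$ vertices and hence $r(F_4,K_n\up 4)>2^{n-2}$. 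This parity argument is what would let you dispose of your case $d=2$ (indeed of all crossing cases) in one stroke, but it requires the richer set of cross edges that your construction omits.
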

\begin{proof}
    Let $G_k$ be the $4$-graph iteratively defined\footnote{We remark that $G_k$ can equivalently be defined as the stepping-up $(G_1,G_2,T_{2,2})$ where $G_1$ and $G_2$ are empty 3-graphs.} as follows: $G_1$ is a single edge, and $G_k$ is obtained from the disjoint union of two copies of $G_{k-1}$ by adding in all edges intersecting each copy in exactly $2$ vertices. Note that $\alpha(G_k) = \alpha(G_{k-1})+1$, since every independent set in $G_k$ cannot contain two vertices from both copies of $G_{k-1}$. Together with the base case $\alpha(G_1)=3$, we see that $\alpha(G_k) = k+2$ for all $k$. Since $G_k$ has $2^{k+1}$ vertices, the claimed result follows if we can show that $G_k$ is $F_4$-free for all $k$.

    If $F_4 \subseteq G_k$ for some $k$, then in particular there is a partition of $V(F_4)$ into two non-empty parts $A\cup B$ such that every edge is either fully contained in a part, or else intersects each part in exactly two vertices, and it can be checked by computer or minor casework that there is no such partition. For example, the following \textsc{SageMath} code does the job.
    \begin{verbatim}
    import sage.combinat.designs
    F = designs.ProjectiveGeometryDesign(2,1,GF(3))
    for A in Subsets(F.ground_set()):
        if all(len(A.intersection(Set(e))) in [0,2,4] for e in F.blocks()):
            print(A)
    \end{verbatim}
    This code outputs only $A = \varnothing$ and $A=V(F_4)$, hence there is no such partition into non-empty parts.
\end{proof}
Note that \cref{thm:main} yields a linear $4$-graph $H$ with $r(H,K_n\up 4) \geq 2^{2^{(\log n)^C}}$, which is larger than the lower bound guaranteed by \cref{prop:F4}. However, the construction of $H$ in \cref{thm:main} is probabilistic, whereas \cref{prop:F4} gives a fully explicit linear $4$-graph, namely $F_4$. We remark that by using this as our base case, one can similarly construct explicit linear $k$-graphs $H$ with $r(H,K_n\up k) \geq \twr_{k-2}(\Omega_k(n))$. Indeed, while the proof of \cref{lem:one shot construction} is probabilistic, it can be made explicit by using the ordered version \cite[Theorem 3.1]{2502.09830} of the recent girth Ramsey theorem of Reiher and R\"odl \cite{2308.15589}. One can use this extremely powerful machinery to provide an alternative proof of \cref{lem:one shot construction}; as the proof of the girth Ramsey theorem is fully deterministic, this would yield an explicit (but extraordinarily large and complicated) linear $k$-graph $H$ with $r(H,K_n\up k) \geq \twr_{k-2}(\Omega(n))$.

Finally, it would be very interesting to improve the lower bound in \cref{thm:linear LB 3-uniform} to $r(H,K_n\up 3) \geq 2^{n^c}$ for some absolute constant $c>0$, and some linear $3$-graph $H$. By repeated applications of \cref{prop:step up linear}, this would immediately imply the existence of a linear $k$-graph $H$ with $r(H,K_n\up k) \geq \twr_{k-1}(n^c)$, matching the upper bound in \eqref{eq:twr UB} up to the constant. 

\bibliographystyle{yuval}
\bibliography{linear.bib}

\begin{thebibliography}{10}
\providecommand{\url}[1]{\texttt{#1}}
\providecommand{\urlprefix}{URL }
\providecommand{\eprint}[2][]{\url{#2}}

\bibitem{MR0600598}
M.~Ajtai, J.~Koml\'{o}s, and E.~Szemer\'{e}di, A note on {R}amsey numbers, \emph{J. Combin. Theory Ser. A} \textbf{29} (1980), 354--360.

\bibitem{MR4201797}
T.~Bohman and P.~Keevash, Dynamic concentration of the triangle-free process, \emph{Random Structures Algorithms} \textbf{58} (2021), 221--293.

\bibitem{2505.13371}
M.~Campos, M.~Jenssen, M.~Michelen, and J.~Sahasrabudhe, A new lower bound for the {R}amsey numbers {$R(3,k)$}, 2025. Preprint available at arXiv:2505.13371.

\bibitem{2404.02021}
D.~Conlon, J.~Fox, B.~Gunby, X.~He, D.~Mubayi, A.~Suk, and J.~Verstra\"ete, On off-diagonal hypergraph {R}amsey numbers, 2024. Preprint available at arXiv:2404.02021.

\bibitem{2411.13812}
D.~Conlon, J.~Fox, B.~Gunby, X.~He, D.~Mubayi, A.~Suk, J.~Verstra\"ete, and H.-H.~H. Yu, When are off-diagonal hypergraph {R}amsey numbers polynomial?, 2024. Preprint available at arXiv:2411.13812.

\bibitem{MR2552253}
D.~Conlon, J.~Fox, and B.~Sudakov, Hypergraph {R}amsey numbers, \emph{J. Amer. Math. Soc.} \textbf{23} (2010), 247--266.

\bibitem{MR3030610}
D.~Conlon, J.~Fox, and B.~Sudakov, An improved bound for the stepping-up lemma, \emph{Discrete Appl. Math.} \textbf{161} (2013), 1191--1196.

\bibitem{MR120168}
P.~Erd\H{o}s, Graph theory and probability. {II}, \emph{Canadian J. Math.} \textbf{13} (1961), 346--352.

\bibitem{MR337636}
P.~Erd\H{o}s and A.~Hajnal, On {R}amsey like theorems. {P}roblems and results, in \emph{Combinatorics ({P}roc. {C}onf. {C}ombinatorial {M}ath., {M}ath. {I}nst., {O}xford, 1972)}, Inst. Math. Appl., Southend-on-Sea, 1972,  123--140.

\bibitem{MR0202613}
P.~Erd\H{o}s, A.~Hajnal, and R.~Rado, Partition relations for cardinal numbers, \emph{Acta Math. Acad. Sci. Hungar.} \textbf{16} (1965), 93--196.

\bibitem{MR0065615}
P.~Erd\H{o}s and R.~Rado, Combinatorial theorems on classifications of subsets of a given set, \emph{Proc. London Math. Soc. (3)} \textbf{2} (1952), 417--439.

\bibitem{MR1556929}
P.~Erd\"os and G.~Szekeres, A combinatorial problem in geometry, \emph{Compositio Math.} \textbf{2} (1935), 463--470.

\bibitem{MR4073152}
G.~Fiz~Pontiveros, S.~Griffiths, and R.~Morris, The triangle-free process and the {R}amsey number {$R(3,k)$}, \emph{Mem. Amer. Math. Soc.} \textbf{263} (2020), v+125.

\bibitem{MR4332486}
J.~Fox and X.~He, Independent sets in hypergraphs with a forbidden link, \emph{Proc. Lond. Math. Soc. (3)} \textbf{123} (2021), 384--409.

\bibitem{MR1369063}
J.~H. Kim, The {R}amsey number {$R(3,t)$} has order of magnitude {$t^2/\log t$}, \emph{Random Structures Algorithms} \textbf{7} (1995), 173--207.

\bibitem{MR4713025}
S.~Mattheus and J.~Verstraete, The asymptotics of {$r(4,t)$}, \emph{Ann. of Math. (2)} \textbf{199} (2024), 919--941.

\bibitem{MR3641804}
D.~Mubayi and A.~Suk, Off-diagonal hypergraph {R}amsey numbers, \emph{J. Combin. Theory Ser. B} \textbf{125} (2017), 168--177.

\bibitem{MR3830113}
D.~Mubayi and A.~Suk, New lower bounds for hypergraph {R}amsey numbers, \emph{Bull. Lond. Math. Soc.} \textbf{50} (2018), 189--201.

\bibitem{2308.15589}
C.~Reiher and V.~R\"odl, The girth {R}amsey theorem, 2023. Preprint available at arXiv:2308.15589.

\bibitem{2502.09830}
C.~Reiher, V.~R\"odl, and M.~Schacht, Unavoidable subgraphs in {R}amsey graphs, 2025. Preprint available at arXiv:2502.09830.

\bibitem{MR708165}
J.~B. Shearer, A note on the independence number of triangle-free graphs, \emph{Discrete Math.} \textbf{46} (1983), 83--87.

\end{thebibliography}

\end{document}